\documentclass[12pt]{amsart}
\usepackage{amssymb, enumerate, mdwlist}

\textwidth 150mm
\textheight 220mm
\evensidemargin\paperwidth
\advance\evensidemargin-\textwidth
\oddsidemargin.5\evensidemargin
\advance\oddsidemargin-1in
\evensidemargin\oddsidemargin

\topmargin\paperheight
\advance\topmargin-\textheight
\topmargin.5\topmargin
\advance\topmargin-1in

\theoremstyle{plain}
\newtheorem{theorem}{Theorem}[section]
\theoremstyle{plain}
\newtheorem{proposition}[theorem]{Proposition}
\theoremstyle{plain}
\newtheorem{lemma}[theorem]{Lemma}
\theoremstyle{plain}
\newtheorem{corollary}[theorem]{Corollary}
\theoremstyle{plain}

\theoremstyle{plain}
\newtheorem{mthm}{Theorem}

\newtheorem{mcor}[mthm]{Corollary}
\theoremstyle{definition}
\newtheorem{definition}[theorem]{Definition}
\theoremstyle{remark}
\newtheorem{remark}[theorem]{Remark}
\theoremstyle{remark}
\newtheorem{example}[theorem]{Example}
\theoremstyle{remark}

\title[Banach Expanders]
{Sphere equivalence, Banach expanders, and extrapolation.}
\author{Masato Mimura} \thanks{Supported in part by the Grant-in-Aid for Young Scientists (B), no.25800033 from the JSPS}
\address{Masato Mimura\\
Mathematical Institute, Tohoku University}
\email{mimura-mas@m.tohoku.ac.jp}
\date{\today}

\begin{document}

\begin{abstract}
We study the Banach spectral gap $\lambda_1(G;X,p)$ of finite graphs $G$ for pairs $(X,p)$ of Banach spaces and exponents. We define the notion of sphere equivalence between Banach spaces and show a generalization of Matou\v{s}ek's extrapolation for Banach spaces sphere equivalent to uniformly convex ones. As a byproduct, we prove that expanders are automatically expanders with respects to $(X,p)$ for any $X$ sphere equivalent to a uniformly curved Banach space and for any $p\in (1,\infty)$.
\end{abstract}

\keywords{Expanders; Banach spectral gaps; Matou\v{s}ek's extrapolation; property $(\tau)$}

\maketitle

\section{Introduction}
In this paper, let $G=(V,E)$ be a finite connected undirected graph, possibly with multiple edges and self-loops (here $E$ is the set of oriented edges). We equip $G$ with the path metric and regard as a metric space. Let $\Delta (G)$ denote the maximal degree of $G$. Let $(X,p)$ be a pair of a Banach space $X$ and an exponent $p$. Our $p$ is always assumed to lie in $[1,\infty)$. Let $\ell_p$ denote $\ell_p(\mathbb{N}, \mathbb{C})$, $L_p$ denote $L_p([0,1],\mathbb{C})$, and $\ell_p^m$, and $\ell_{\infty}^m$ ($m\in \mathbb{N}$) respectively denote the $m$-dimensional real $\ell_p$, and $\ell_{\infty}$ spaces. For $(X,p)$, define $\tilde{X}_{(p)}$ to be the \textit{$p$-stabilization} of $X$, namely, $\tilde{X}_{(p)}:=\ell_p(\mathbb{N},X)$. For a Banach space $X$, $S(X)$ denotes the unit sphere of $X$. We use freely the symbol $a \precsim b$ for two nonnegative functions from the same parameter set $\mathcal{T}$ if there exists a positive multiplicative constant $C > 0$ \textit{independent of $t\in \mathcal{T}$} such that for any $t \in \mathcal{T}$, $a(t) \leq C b(t)$ holds true. We use the symbol $a \asymp b$ if $a \precsim b$ and $a \succsim b$ hold. We use the symbol $a \precsim_q b$ if parameter set $\mathcal{T}$ has  variable $q$ and the positive multiplicative constant $C=C_q$ may depend on the choice of $q$. We write $a \precnsim b$ if $a\precsim b$ holds but $a \succsim b$ fails to be true.

The main topic of this paper is the notion of \textit{$(X,p)$-anders} for a pair $(X,p)$, which is defined in terms of the \textit{Banach spectral gap}. Note that Mendel and Naor \cite{MN} have explicitly introduced the notion of \textit{nonlinear spectral gaps} (for the more general case where $X$ is a metric space) and studied that in detail. 

\begin{definition}\label{DefinitionSpgap}
\begin{enumerate}[$(1)$]
  \item The \textit{Banach spectral gap} is defined as follows: the $(X,p)$\textit{-spectral gap} of $G$, written as $\lambda_1 (G;X,p)$, is 
\[
\lambda_1 (G;X,p) := \frac{1}{2}\inf_{f \colon V\to X} \frac{\sum_{v\in V}\sum_{e=(v,w)\in E}\|f(w)-f(v)\|^p }{\sum_{v\in V}\| f(v)- m(f)\|^p}.
\]
Here $m(f):=\sum_{v\in V}f(v)/|V|$ and $f$ runs over all nonconstant maps. 
 \item A sequence  $\{G_n\}_{n\in \mathbb{N}}$ of finite connected graphs is called a \textit{family of $(X,p)$-anders} $($or simply, \textit{``$(X,p)$-anders"}) if the following three conditions are satisfied: $\sup_{n}\Delta (G_n) < \infty$; $\lim_{n\to \infty}\mathrm{diam}(G_n)=\infty$; and $\inf_{n} \lambda_1(G_n;X,p) >0$.
\end{enumerate}
\end{definition}
If $G$ is regular, then $\lambda_1(G;\mathbb{R},2)$ is the first positive eigenvalue of the nonnormalized combinatorial Laplacian of $G$. Therefore $(\mathbb{R},2)$-anders are \textit{expanders} in the classical sense, see  a survey \cite{HLW}. Being $(X,p)$-anders for some fixed $p$ implies poor embeddability into $X$. See Section~\ref{SectionPre} on \textit{coarse embeddings} and \textit{distortions}.

We recall the definitions of \textit{uniformly curved} Banach spaces, and of \textit{uniformly convex} Banach spaces; and give the definition of the \textit{sphere equivalence} among Banach spaces.

\begin{definition}\label{DefintionUC}
Let $X$ be a Banach space.
\begin{enumerate}[$(1)$]
  \item (Pisier \cite{Pisier}, using some idea of V. Lafforgue.) The $X$ is said to be \textit{uniformly curved} if $\lim_{\epsilon \to +0}\Lambda_X(\epsilon)$$=0$ holds. Here $\Lambda_X(\epsilon)$ denote the infimum over those $\Lambda\in (0,\infty)$ such that for every $n\in \mathbb{N}$, every matrix $T=(t_{ij})_{i,j}\in \mathrm{M}_n(\mathbb{R})$ with 
\[
\|T\|_{L_2^n(\mathbb{R})\to L_2^n(\mathbb{R})}\leq \epsilon \quad \textrm{and}\quad \|\mathrm{abs}(T)\|_{L_2^n(\mathbb{R})\to L_2^n(\mathbb{R})}\leq 1,
\]
where $\mathrm{abs}(T)=(|t_{ij}|)_{i,j}$ is the entrywise absolute value of $T$, satisfies that 
\[
\|T\otimes I_X\|_{L_2^n(X)\to L_2^n(X)}\leq \Lambda.
\]
  \item The $X$ is said to be \textit{uniformly convex} if for any $\epsilon \in (0,2]$,
\[
\sup\{\|x+y\|/2: x,y \in S(X),\ \|x-y\|\geq \epsilon  \}<1.
\]
\end{enumerate}
\end{definition}
Note that Pisier \cite[Section~2]{Pisier} has showed that if $X$ is uniformly curved, then it is \textit{superreflexive}, which is equivalent to saying that $X$ admits an equivalent uniformly convex norm.

\begin{definition}\label{DefinitionSE}
Two Banach spaces $X$ and $Y$ are said to be \textit{sphere equivalent}, written as $X \sim_S Y$, if there exists a uniform homeomorphism (, namely, a biuniformly continuous map) between the two unit spheres $S(X)$ and $S(Y)$. We write $[Y]_S$ for the sphere equivalence class of $Y$.
\end{definition}

One of the motivating big open problems in this field is: whether any (classical) expander family is an $(X,p)$-ander family for each $p\in [1,\infty)$ and every Banach space $X$ which does not admit embeddings of the spaces $\{\ell_{\infty}^m\}_{m\geq 1}$ with uniformly bounded distortions (note that this condition on $X$ is equivalent to saying that $X$ is of \textit{finite} (or equivalently, \textit{nontrivial}) \textit{cotype}). Note that there are \textit{no} $(X,p)$-anders for any $p\in [1,\infty)$ if $X$ has trivial cotype by the Frech\'{e}t classical embeddings. In this paper, we study dependencies of $\lambda_1 (G;X,p)$ on $X$; and on $p$, and prove results on the problem above. The main theorems of this paper roughly state the stabilities of $(X,p)$-ander properties in terms of the sphere equivalence (see Definition~\ref{DefinitionSE}) between Banach spaces.

\begin{mthm}[For more precise statement, see Theorem~$\ref{TheoremSEgap}$]
\label{mthma}

Let $X,Y$ be Banach spaces. If $X \sim_S Y$, then for any $p\in [1,\infty )$, and a sequence  $\{G_n\}_n$, $\{G_n\}_n$ is a family of $(X,p)$-anders if and only if it is a family of $(Y,p)$-anders.
\end{mthm}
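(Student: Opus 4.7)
The plan is to transfer spectral gap witnesses from one space to the other using the positively homogeneous extension of the uniform homeomorphism between the unit spheres. By the symmetry of $\sim_S$, it suffices to show one implication: if $\{G_n\}_n$ is a family of $(X,p)$-anders, then it is a family of $(Y,p)$-anders; the converse follows by swapping the roles of $\phi$ and $\phi^{-1}$. I argue contrapositively, producing from witnesses $f_n\colon V_n\to X$ with vanishing Rayleigh quotient new witnesses $g_n\colon V_n\to Y$ of the same type.

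Fix a uniform homeomorphism $\phi\colon S(X)\to S(Y)$, with nondecreasing moduli $\omega_\phi,\omega_{\phi^{-1}}$ (both vanishing at $0$), and extend $\phi$ by positive homogeneity to the norm-preserving map $\Phi\colon X\to Y$ defined by $\Phi(0):=0$ and $\Phi(x):=\|x\|_X\,\phi(x/\|x\|_X)$ for $x\ne 0$. Writing $\hat x:=x/\|x\|_X$, the identity
\[
\Phi(a)-\Phi(b)=\|a\|_X\bigl(\phi(\hat a)-\phi(\hat b)\bigr)+\bigl(\|a\|_X-\|b\|_X\bigr)\phi(\hat b),
\]
combined with the elementary estimate $\|\hat a-\hat b\|_X\le 2\|a-b\|_X/\max(\|a\|_X,\|b\|_X)$, yields a pointwise two-sided comparison of $\|\Phi(a)-\Phi(b)\|_Y$ with $\|a-b\|_X$ in terms of the scale $\max(\|a\|_X,\|b\|_X)$ and of $\omega_\phi$ and the generalized inverse of $\omega_{\phi^{-1}}$.

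The ratio defining $\lambda_1(G;X,p)$ is invariant under the transformations $f\mapsto cf$ and $f\mapsto f+x_0$, so I normalize each $f_n$ so that $m(f_n)=0$ and $\max_{v,w}\|f_n(v)-f_n(w)\|_X=1$; Jensen's inequality then gives $\|f_n(v)\|_X\le 1$ for every $v$, while some pair $v_0,v_1$ satisfies $\|f_n(v_0)\|_X+\|f_n(v_1)\|_X\ge 1$, so the denominator $\sum_v\|f_n(v)\|_X^p$ is bounded below by a positive absolute constant. Setting $g_n:=\Phi\circ f_n$, the pointwise estimates from the previous step, combined with the bounded degree $D:=\sup_n\Delta(G_n)<\infty$, give an upper bound $\sum_e\|g_n(w)-g_n(v)\|_Y^p\precsim_{\phi,p,D}\sum_e\|f_n(w)-f_n(v)\|_X^p+o(1)$, while the equivalent mean-free denominator $|V_n|^{-1}\sum_{v,w}\|g_n(v)-g_n(w)\|_Y^p$ (comparable to $\sum_v\|g_n(v)-m(g_n)\|_Y^p$ up to a $p$-dependent constant) is bounded below, via the inverse-modulus estimate, by a positive multiple of $|V_n|^{-1}\sum_{v,w}\|f_n(v)-f_n(w)\|_X^p$, which is uniformly bounded below. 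Hence $\lambda_1(G_n;Y,p)\to 0$.

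The main obstacle lies in the last step: since $\omega_\phi$ is only uniformly continuous, not Lipschitz, pointwise modulus estimates do not raise cleanly to the $p$-th power. The normalization $m(f_n)=0$, $\mathrm{diam}(f_n(V_n))=1$ is crucial because it confines every $\|f_n(v)\|_X$ and $\|f_n(v)-f_n(w)\|_X$ to the bounded interval $[0,2]$, so that $\omega_\phi$ operates in a single compact regime; concavity of $\omega_\phi$ (which may be assumed without loss of generality) together with the bounded-degree summation then absorbs the modulus into a single positive constant depending only on $\phi$, $p$, and $D$, completing the transfer.
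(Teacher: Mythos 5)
Your overall strategy --- push a near-optimal test function $f_n\colon V_n\to X$ forward through the homogeneous extension $\overline{\phi}$ and compare Rayleigh quotients term by term --- has a genuine gap at exactly the step you flag as "the main obstacle," and concavity of the modulus does not repair it. The problem is that both of your key estimates apply the (merely uniform, non-Lipschitz) modulus \emph{per edge} and \emph{per pair}, while the spectral gap is a ratio whose numerator and denominator can live at completely different scales. Concretely: under your normalization ($m(f_n)=0$, $\mathrm{diam}(f_n(V_n))=1$) the old denominator $\sum_v\|f_n(v)\|^p$ can be anywhere between a constant and $|V_n|$, so "vanishing Rayleigh quotient" only gives $\sum_e\|f_n(w)-f_n(v)\|_X^p\le 2\lambda_n\sum_v\|f_n(v)\|^p$, which need not be small. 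Your claimed bound $\sum_e\|g_n(w)-g_n(v)\|_Y^p\precsim\sum_e\|f_n(w)-f_n(v)\|_X^p+o(1)$ is unobtainable from a modulus $\omega$ with $\omega(t)\gg t$ near $0$ (e.g.\ a H\"older modulus, as for Mazur maps): edges with tiny differences contribute up to $\Delta|V_n|\,\omega(\epsilon)^p$, an error of order $|V_n|$, not $o(1)$; and the best Jensen-type bound gives roughly $|E_n|\,\omega\bigl(C\lambda_n^{1/p}\bigr)^p$, again carrying a factor $|V_n|$. Likewise, the lower bound "new denominator $\succsim$ a positive multiple of the old pairwise sum" would require $\phi^{-1}$ to be Lipschitz; with only an inverse modulus you get $\|g(v)-g(w)\|\ge(\omega')^{-1}(\|f(v)-f(w)\|)$, which is not a positive multiple. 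Since the new denominator is only guaranteed to be $\succsim 1$ while your numerator error is of order $|V_n|$, the ratio is not controlled, and no choice of cutoff or concave majorant closes the gap: any correct bound must apply the modulus \emph{once}, not summed over $\asymp|V_n|$ terms, which is why the true estimate (Theorem~\ref{TheoremSEgap}) has the form $\lambda_1(G;X,p)^{1/p}\succsim\delta_1^{-1}\bigl(c\,\delta_2^{-1}(\tfrac12)\,(\lambda_1(G;Y,p)/\Delta(G))^{1/p}\bigr)$, with no dependence on $|V|$.

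The paper avoids this by changing the object to which the sphere homeomorphism is applied. First, via Gross's theorem every finite connected graph, after even-regularization (doubling edges and adding self-loops, which only rescales $\lambda_1$), is a Schreier coset graph $\mathrm{Sch}(\Gamma,H,S)$; this is the "Gross trick." For such graphs, $\lambda_1(G;X,p)$ is comparable (with constants depending only on $|S|$) to the $p$-displacement constant $\kappa_{X,p}(\Gamma,H,S)=\inf_{\xi}\sup_{s\in S}\|\pi_{X,p}(s)\xi-\xi\|$ over \emph{unit} vectors $\xi$ in the zero-sum part of $\ell_p(\Gamma/H,\tilde X_{(p)})$ (Lemma~\ref{LemmaDisp}). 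The whole test function is thus encoded as a single unit vector, and the coordinatewise extension $\Phi_p$ of $\phi$ (Proposition~\ref{PropositionSt}), which is a $\mathrm{Sym}$-equivariant uniform homeomorphism $S(\ell_p(\Gamma/H,\tilde X_{(p)}))\to S(\ell_p(\Gamma/H,\tilde Y_{(p)}))$ --- this is where the $p$-stabilization and your concavity idea legitimately enter --- intertwines the quasi-regular representations, so the modulus is applied exactly once to the single displacement $\sup_s\|\pi_{X,p}(s)\xi-\xi\|$. The remaining technical point, that $\Phi_p\xi$ need not be zero-sum, is handled by showing its invariant component is at distance $\ge\frac12\delta_2^{-1}(\tfrac12)$ from killing it (Proposition~\ref{PropositionTau}). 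If you want to salvage a direct graph-level argument, you would need some device playing the same role as this "one unit vector in a stabilized space" reformulation; the per-edge pushforward by itself cannot yield a degree-only-dependent transfer.
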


\begin{mthm}[Generalization of Matou\v{s}ek's extrapolation]
\label{mthmb}

Let $p,q \in (1,\infty)$. Then for any Banach space $X$ that is sphere equivalent to a uniformly convex Banach space, and a sequence  $\{G_n\}_n$, $\{G_n\}_n$ is a family of $(X,p)$-anders if and only if it is a family of $(X,q)$-anders.
\end{mthm}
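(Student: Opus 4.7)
The plan is to reduce via Theorem~\ref{mthma} to the case where $X$ itself is uniformly convex, and then to run a Matou\v{s}ek-type extrapolation argument in that setting. Concretely, since $X \sim_S Y$ for some uniformly convex $Y$, Theorem~\ref{mthma} identifies the families of $(X, r)$-anders with the families of $(Y, r)$-anders for every $r \in [1, \infty)$. Taking $r = p$ and $r = q$ reduces Theorem~\ref{mthmb} to the statement that, for uniformly convex $X$ and $p, q \in (1, \infty)$, under the uniform degree bound $\sup_n \Delta(G_n) < \infty$, the $(X, p)$-ander and $(X, q)$-ander properties coincide. This is the genuine extension of Matou\v{s}ek's $L_p$-extrapolation from $L_p$ spaces to arbitrary uniformly convex Banach spaces.

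For the reduced statement, I would establish, for each $p, q \in (1, \infty)$ and $d \in \mathbb{N}$, a strictly increasing function $F = F_{p,q,d,X} \colon [0, \infty) \to [0, \infty)$ with $F(t) > 0$ whenever $t > 0$, depending on $X$ only through its modulus of convexity, such that $\lambda_1(G; X, q) \geq F(\lambda_1(G; X, p))$ for every graph $G$ with $\Delta(G) \leq d$. Applied to the pairs $(p, q)$ and $(q, p)$ in turn, this immediately yields the theorem. By symmetry in $p, q$ it suffices to treat $p \leq q$, and by transitivity of the equivalence one may even assume $q = 2$. The heart of the argument is the radial deformation $\psi_\alpha \colon X \to X$ given by $\psi_\alpha(x) := \|x\|^{\alpha - 1} x$ with $\alpha = q/p \geq 1$: one substitutes $\psi_\alpha \circ (f - c)$, for a carefully chosen center $c$, into the hypothesized $(X, p)$-Poincar\'{e} inequality for $f \colon V \to X$, applies the pointwise bound $\|\psi_\alpha(x) - \psi_\alpha(y)\| \leq \alpha (\|x\|^{\alpha - 1} + \|y\|^{\alpha - 1}) \|x - y\|$ (which is valid in any Banach space), and then uses H\"{o}lder's inequality to convert the resulting mixed edge sum back into a pure $q$-th power edge sum of $f$. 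For the reverse direction $(q \to p)$ with $\alpha = p/q \leq 1$, an analogous but slightly different argument applies (or one invokes duality of Poincar\'{e} constants).

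The main obstacle is to keep every constant in the resulting $(X, q)$-Poincar\'{e} inequality independent of $|V(G)|$. Two delicate points arise. First, the choice of center $c$: taking $c = m(f)$ is generally too crude, so one must instead pick a near-optimal $c$ for the $q$-th power functional and then use uniform convexity (equivalently, finite cotype of $X$) to certify that this $c$ is comparable to $m(f)$ for the purposes of the left-hand side. Second, the H\"{o}lder step produces a residual factor of the form $(\sum_v \|f(v) - c\|^q)^{1 - p/q}$ that must be absorbed back into the left-hand side; here again uniform convexity enters, via a cotype-type inequality, to justify the absorption with a $|V|$-independent constant. The uniform bound $\Delta(G) \leq d$ is used through $|E| \leq d |V|$ in the H\"{o}lder step, which is exactly what forces the extrapolation constant to depend on $d$ but not on $|V|$; this is also why the bounded-degree hypothesis in the definition of $(X, p)$-anders is essential for the extrapolation.
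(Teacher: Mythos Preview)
Your reduction via Theorem~\ref{mthma} to the uniformly convex case is correct and is also the paper's first move. From there, however, the paper takes a completely different route: it does \emph{not} attempt any direct Matou\v{s}ek-style extrapolation in $Y$. Instead it invokes Theorem~\ref{TheoremExtra}: for $Y$ uniformly convex and $p,q\in(1,\infty)$, complex interpolation of the pair $(\tilde{Y}_{(p_0)},\tilde{Y}_{(p_1)})$ produces a $\mathrm{Sym}(\mathbb{N})$-equivariant uniform homeomorphism $\Phi\colon S(\tilde{Y}_{(p)})\to S(\tilde{Y}_{(q)})$. That equivariance is exactly what is needed to rerun the representation-theoretic argument of Proposition~\ref{PropositionTau} with the pair $((Y,p),(Y,q))$ in place of $((X,p),(Y,p))$, first for Schreier coset graphs and then for arbitrary graphs via the Gross trick. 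Uniform convexity is used only once, to manufacture $\Phi$; no cotype-type inequality appears anywhere.

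Your analytic route has a genuine gap precisely at the step you flag as ``delicate.'' After substituting $g=\psi_\alpha\circ(f-c)$ into the $(X,p)$-Poincar\'e inequality, the left side is $\sum_v\|g(v)-m(g)\|^p$, and you need this to dominate $\sum_v\|g(v)\|^p=\sum_v\|f(v)-c\|^q$. Your recipe --- take $c$ a near-minimizer of $\sum_v\|f(v)-c\|^q$ and invoke uniform convexity --- does not deliver this: the first-order condition for that minimizer is an equation in $X^*$ via the duality map and says nothing about $m(g)=\frac{1}{|V|}\sum_v\psi_\alpha(f(v)-c)\in X$. Uniform convexity gives uniqueness and stability of the minimizer, not smallness of $m(g)$; and the parenthetical ``uniform convexity (equivalently, finite cotype)'' is false ($\ell_1$ has cotype $2$). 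Your second ``delicate point,'' absorbing the H\"older residual $(\sum_v\|f(v)-c\|^q)^{1-p/q}$, is in fact pure algebra once the left side has been identified with $\sum_v\|f(v)-c\|^q$, so no geometric hypothesis is needed there either; and ``duality of Poincar\'e constants'' is not an available tool on graphs. What would actually repair your scheme is a \emph{topological} choice of center: restrict to the finite-dimensional span $W$ of $f(V)$ and use a Brouwer/degree argument on $c\mapsto\sum_v\psi_\alpha(f(v)-c)$ (which behaves like $-|V|\,\psi_\alpha(c)$ for $\|c\|$ large) to find $c\in W$ with $m(g)=0$ exactly. With that $c$, and the elementary inequality $\sum_v\|f(v)-m(f)\|^q\le 2^q\sum_v\|f(v)-c\|^q$, both exponent directions go through by your H\"older manipulations --- and, notably, uniform convexity is then never used.
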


We note that recently A. Naor, in Theorem~1.10 and Theorem~4.15 in \cite{Naor}, has independently established similar results. Our approach is \textit{group theoretic}, and different from his. In our proof, we introduce the \textit{``Gross trick"}, see Subsection~\ref{subsection:Gross}.

We review previous works in these directions. On the dependence on $X$,  N. Ozawa \cite{Ozawa} showed that expanders satisfy a weak form of $(X,1)$-ander condition, if $X$ is sphere equivalent to $\ell_2$. G. Pisier \cite[Section~3]{Pisier} showed that expanders are $(X,2)$-anders for uniformly curved Banach spaces $X$. On the dependence on $p$, J. Matou\v{s}ek \cite{Matousek} made the following estimation of $\lambda_1(G;\mathbb{R},p)$ in terms of $\lambda_1(G;\mathbb{R},2)$, which is now called the \textit{Matou\v{s}ek extrapolation} (see also  in \cite[Lemma~5.5]{BLMN}). 

\begin{theorem}[Matou\v{s}ek's extrapolation. \cite{Matousek}]
\begin{enumerate}[$(1)$]
  \item For $p\in [1,2)$, $\lambda_1(G;\mathbb{R},p)\succsim_{\Delta(G),p} \lambda_1(G;\mathbb{R},2)$.
  \item For $p\in [2,\infty)$, $\lambda_1(G;\mathbb{R},p)\asymp_{\Delta(G),p} \lambda_1(G;\mathbb{R},2)^{p/2}$.
\end{enumerate}
In particular, the properties of being $(\mathbb{R},p)$-anders are all equivalent for each $p\in [1,\infty)$.
\end{theorem}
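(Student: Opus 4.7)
The plan is to establish both parts via the classical ``transplant'' $\Phi_r(t):=\operatorname{sign}(t)|t|^r$ between $L^p$ and $L^2$, applying the $L^2$ Poincar\'e inequality together with H\"older and a median-centering trick. The skeleton of the lower bound in (2) is: given a test function $f$ for $\lambda_1(G;\mathbb{R},p)$, substitute $g:=\Phi_{p/2}(f)$, apply the $L^2$ Poincar\'e inequality to $g$, and bound the $L^2$ Dirichlet energy of $g$ in terms of the $L^p$ Dirichlet energy of $f$ via a pointwise inequality followed by H\"older. Part (1), the upper bound in (2), and the ``in particular'' statement will then follow by variants of the same idea.

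Concretely, for the $p\geq 2$ lower bound let $f\colon V\to\mathbb{R}$ be a test function. First I would replace $f$ by $f-m^*$, where $m^*:=\operatorname{median}(f)$; this does not affect $\sum_e|f(w)-f(v)|^p$ and changes the denominator by at most the factor $2^p$, since $\sum_v|f-m(f)|^p\leq 2^p\sum_v|f-c|^p$ for every $c$ by Jensen applied to $m(f)$. The sets $A:=\{f>0\}$ and $B:=\{f<0\}$ then each have cardinality at most $|V|/2$, so setting $g_\pm:=f_\pm^{p/2}$ (supported on $A$, $B$ respectively), Cauchy--Schwarz on the support yields $\sum_v(g_\pm-m(g_\pm))^2\geq\tfrac12\sum_v g_\pm^2=\tfrac12\sum_v f_\pm^p$. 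Apply the $L^2$ Poincar\'e inequality to $g_\pm$, use $|g_\pm(w)-g_\pm(v)|\leq|g(w)-g(v)|$ where $g:=\Phi_{p/2}(f)$, and invoke the pointwise mean-value bound
\[
|\Phi_{p/2}(a)-\Phi_{p/2}(b)|^2\leq (p/2)^2\max(|a|,|b|)^{p-2}|a-b|^2.
\]
H\"older with conjugate exponents $p/(p-2)$ and $p/2$ on edges, together with $\sum_e\max(|f(w)|,|f(v)|)^p\leq 2\Delta(G)\sum_v|f|^p$, produces after summing the $g_\pm$ contributions and rearranging
\[
\lambda_1(G;\mathbb{R},2)^{p/2}\sum_v|f|^p\precsim_{p,\Delta(G)}\sum_e|f(w)-f(v)|^p,
\]
which is the desired lower bound once the $2^p$ median factor is restored.

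Part (1) is an easier variant: for $p\leq 2$ the function $\Phi_{p/2}$ is H\"older-continuous of exponent $p/2$, so the pointwise inequality becomes $|\Phi_{p/2}(a)-\Phi_{p/2}(b)|^2\leq C_p|a-b|^p$, bypassing the H\"older step and directly giving $\sum_e|g(w)-g(v)|^2\leq C_p\sum_e|f(w)-f(v)|^p$; the same centering manoeuvre then yields $\lambda_1(G;\mathbb{R},p)\succsim_{p,\Delta(G)}\lambda_1(G;\mathbb{R},2)$. The matching upper bound in (2) is obtained by a separate test-function construction using an $L^2$ optimizer $\phi$ combined with the pointwise inequality $|a-b|^p\leq(2\|\phi\|_\infty)^{p-2}|a-b|^2$, suitably normalized. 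The ``in particular'' conclusion follows from (1), (2), and the trivial bound $\lambda_1(G;\mathbb{R},2)\leq 2\Delta(G)$: for bounded-degree graphs, $\lambda_1(G;\mathbb{R},2)^{p/2}$ bounded away from zero is equivalent to $\lambda_1(G;\mathbb{R},2)$ itself bounded away from zero, and the reverse implication for $p<2$ is secured by the elementary indicator-function bound $\lambda_1(G;\mathbb{R},p)\precsim_p h(G)\precsim\sqrt{\Delta(G)\lambda_1(G;\mathbb{R},2)}$ coming from Cheeger's inequality.

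The principal obstacle is the centering issue: $g=\Phi_{p/2}(f)$ generically has $m(g)\neq 0$, so the $L^2$ Poincar\'e inequality applied to $g$ does not directly control $\sum_v g^2=\sum_v|f|^p$. Resolving this via median-centering of $f$ and the splitting $g=g_+-g_-$ into parts each supported on at most half the vertices is, in my view, the technical heart of the proof; everything else reduces to elementary H\"older and a pointwise estimate.
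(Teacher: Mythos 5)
The paper offers no proof of this theorem --- it is quoted from Matou\v{s}ek's paper (see also Lemma~5.5 of Bartal--Linial--Mendel--Naor) --- so your attempt can only be measured against the classical argument. Your proof of (1) and of the inequality $\lambda_1(G;\mathbb{R},p)\succsim_{\Delta(G),p}\lambda_1(G;\mathbb{R},2)^{p/2}$ in (2) is correct and is essentially Matou\v{s}ek's own: median-centering (with the factor $2^p$ comparing centering at $m(f)$ with centering at the median), the splitting $g=g_+-g_-$ into parts supported on at most half of the vertices so that Cauchy--Schwarz gives $\sum_v(g_\pm-m(g_\pm))^2\geq\tfrac12\sum_v g_\pm^2$, the pointwise mean-value bound for $t\mapsto\mathrm{sign}(t)|t|^{p/2}$, H\"older with exponents $p/(p-2)$ and $p/2$, and $\sum_e\max(|f(v)|,|f(w)|)^p\leq 2\Delta(G)\sum_v|f|^p$; these steps all check out, as does the H\"older-continuity shortcut for $p<2$. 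Your route to the ``in particular'' clause is also sound, because for the converse implications you invoke only $\lambda_1(G;\mathbb{R},p)\precsim_p h(G)\precsim\sqrt{\Delta(G)\lambda_1(G;\mathbb{R},2)}$ and $\lambda_1(G;\mathbb{R},2)\leq 2\Delta(G)$, not the matching upper bound in (2).

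That matching upper bound is the genuine gap. From your sketch one gets $\sum_e|\phi(w)-\phi(v)|^p\leq(2\|\phi\|_\infty)^{p-2}\cdot 2\lambda_1(G;\mathbb{R},2)\|\phi\|_2^2$ for an $\ell_2$-optimizer $\phi$ with $m(\phi)=0$, so the bound on the $p$-Rayleigh quotient is $(2\|\phi\|_\infty)^{p-2}\lambda_1(G;\mathbb{R},2)\|\phi\|_2^2/\|\phi\|_p^p$; since $\|\phi\|_p^p\leq\|\phi\|_\infty^{p-2}\|\phi\|_2^2$, this is at least $2^{p-2}\lambda_1(G;\mathbb{R},2)$ and is never of order $\lambda_1(G;\mathbb{R},2)^{p/2}$ when the gap is small. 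Moreover, no choice of test function can repair this, because the inequality $\lambda_1(G;\mathbb{R},p)\precsim_{\Delta(G),p}\lambda_1(G;\mathbb{R},2)^{p/2}$ is false in general for $p>2$: let $G_n$ be two $3$-regular expanders on $n$ vertices joined by a single edge. The two-valued test function $\pm 1$ gives $\lambda_1(G_n;\mathbb{R},p)\precsim_p 1/n$ for every $p$, and conversely, using the (already proved) $\succsim$ half of (2), each half is an $(\mathbb{R},p)$-expander, so any mean-zero $f$ with $\sum_v|f|^p=1$ either has $p$-variance $\succsim_p 1$ inside one half, or its two half-means differ by $\succsim_p n^{-1/p}$, in which case either a half-variance or the bridge term contributes $\succsim_p n^{-1}$ to the energy; hence $\lambda_1(G_n;\mathbb{R},p)\asymp_p n^{-1}$ for every fixed $p$, while $\lambda_1(G_n;\mathbb{R},2)^{p/2}\asymp n^{-p/2}$, with $\Delta(G_n)=4$ fixed. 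So only the $\succsim$ direction of (2) can be established --- that is the direction proved by Matou\v{s}ek and the only one needed for the ``in particular'' clause and for the applications in this paper --- and your argument for that direction, for (1), and for the final clause stands; the claimed construction of the matching upper bound should be dropped.
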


As a corollary to above Theorems A and B, and aforementioned works of Ozawa and Pisier, we have the following corollary. 

\begin{mcor}\label{mcorc}
Any expanders are automatically $(X,p)$-anders for a Banach space $X$ sphere equivalent to uniformly curved Banach space and for $p\in (1,\infty)$. If, moreover, $X \in [\ell_2]_S$, then the assertion above holds even for $p=1$.
\end{mcor}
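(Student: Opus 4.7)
The plan is to deduce Corollary~\ref{mcorc} by assembling Theorems~\ref{mthma} and~\ref{mthmb} with the results of Pisier and Matou\v{s}ek recalled in the introduction, using sphere equivalence as the transport mechanism.

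For the first assertion, assume $X\sim_S Y$ with $Y$ uniformly curved and fix $p\in(1,\infty)$. I would start from Pisier's theorem, which gives that expanders are $(Y,2)$-anders. To apply Theorem~\ref{mthmb} to $Y$ itself, one needs $Y$ to be sphere equivalent to a uniformly convex space; but uniformly curved implies superreflexive (as noted just after Definition~\ref{DefintionUC}), so $Y$ admits an equivalent uniformly convex norm $\|\cdot\|'$. Because the two norms are comparable, the radial projection $x\mapsto x/\|x\|'$ is a bi-Lipschitz map $S(Y)\to S(Y,\|\cdot\|')$, witnessing $Y\sim_S (Y,\|\cdot\|')$. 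Theorem~\ref{mthmb} then upgrades the $(Y,2)$-ander property to the $(Y,p)$-ander property for every $p\in(1,\infty)$, and finally Theorem~\ref{mthma} transfers this across $X\sim_S Y$.

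For the second assertion, suppose $X\in[\ell_2]_S$ and $p=1$. By Theorem~\ref{mthma} it suffices to show that expanders are $(\ell_2,1)$-anders. I would route through $\ell_1$: the classical Mazur map supplies a uniform homeomorphism $S(\ell_1)\to S(\ell_2)$, so $\ell_1\sim_S \ell_2$, and Theorem~\ref{mthma} reduces the task further to showing that expanders are $(\ell_1,1)$-anders. This last step is coordinatewise: for $f\colon V\to \ell_1$ with coordinates $f_i\colon V\to\mathbb{R}$, both $\sum_{(v,w)\in E}\|f(w)-f(v)\|_1$ and $\sum_{v}\|f(v)-m(f)\|_1$ split as sums over $i$ of the corresponding scalar quantities for $f_i$, so the Rayleigh quotient is bounded below by the minimum over $i$ of scalar Rayleigh quotients, giving $\lambda_1(G;\ell_1,1)=\lambda_1(G;\mathbb{R},1)$. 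The scalar Matou\v{s}ek extrapolation then closes the argument.

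The main ``obstacle'' is minor and essentially bookkeeping: one must verify that Theorem~\ref{mthmb}, stated for spaces sphere equivalent to uniformly convex ones, is applicable to the uniformly curved $Y$ produced by Pisier's theorem---this is the superreflexivity observation above. Beyond that, the proof is a formal chain of reductions, and Ozawa's weak $(X,1)$-ander conclusion for $X\in[\ell_2]_S$ is subsumed and strengthened by the route outlined here.
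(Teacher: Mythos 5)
Your proposal is correct, and for the first assertion it is exactly the paper's argument: Pisier's theorem gives the $(Y,2)$-ander property for the uniformly curved $Y$, superreflexivity of uniformly curved spaces supplies an equivalent uniformly convex norm (so that $Y$ is isomorphic, hence sphere equivalent via the radial map, to a uniformly convex space), Theorem~\ref{mthmb} moves the exponent from $2$ to any $p\in(1,\infty)$, and Theorem~\ref{mthma} transports the conclusion across $X\sim_S Y$. For the second assertion your route differs slightly from the paper's: the paper applies Theorem~\ref{mthma} once to reduce $(X,1)$ to $(\ell_2,1)$ and then invokes Lemma~\ref{LemmaSpGap}(3) --- which rests on the isometric embedding of $\ell_2$ into $L_1$ (Dor) --- to identify $\lambda_1(G;\ell_2,1)$ with $\lambda_1(G;\mathbb{R},1)$, before finishing with the scalar Matou\v{s}ek extrapolation; you instead apply Theorem~\ref{mthma} a second time across $\ell_1\sim_S\ell_2$ (Mazur map) and verify $\lambda_1(G;\ell_1,1)=\lambda_1(G;\mathbb{R},1)$ by the coordinatewise splitting, which is just the special case $L_1(\mathbb{N},\mathbb{R})$ of Lemma~\ref{LemmaSpGap}(2). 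Both closings are valid; the paper's is marginally more economical (one application of Theorem~\ref{mthma}, no Mazur map needed), while yours avoids the isometric-embedding input at the cost of an extra sphere-equivalence transfer. Your coordinatewise inequality should be read as an infimum over the (nonconstant) coordinates rather than a minimum, but that does not affect the bound $\frac{\sum_i A_i}{\sum_i B_i}\geq \inf_i \frac{A_i}{B_i}$, so there is no gap.
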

With the aid of a generalized Grigorchuk--Nowak inequality (see Theorem~\ref{TheoremGriNo} and \cite{GN}), this implies that expanders achieve the \textit{worst order of distortions} into any infinite-dimensional Banach space $X$ that is sphere equivalent to a uniformly curved Banach space. See Corollary~\ref{CorollaryExpDist} for details.

\section{Preliminaries}\label{SectionPre}
We use the notation of the Introduction.
\subsection{Basic properties of Banach spectral gaps}
\begin{lemma}\label{LemmaSpGap}
\begin{enumerate}[$(1)$]
  \item If $Y$ is a  subspace of $X$, then $\lambda_1(G;Y,p)\geq \lambda_1(G;X,p)$.
  \item For a measure space $(\Omega ,\mu)$, we have that $\lambda_1(G;X,p)= \lambda_1(G;L_p(\Omega ,X),p)$. Here $L_p(\Omega ,X)$ denotes the space of all $p$-Bochner integrable maps.

In particular, $\lambda_1(G;\mathbb{C},p)= \lambda_1(G;\ell_p,p) = \lambda_1(G;L_p,p)$, $\lambda_1(G;\mathbb{R},2)=\lambda_1(G;\mathbb{C},2)=\lambda_1(G;\ell_2,2)$, and $\lambda_1(G;\tilde{X}_{(p)},p)=\lambda_1(G;X,p)$ for any $(X,p)$.
  \item We have that $\lambda_1(G;\ell_2,p)= \lambda_1(G;L_p,p) = \lambda_1(G;\ell_p,p)= \lambda_1(G;\mathbb{C},p)=  \lambda_1(G;\mathbb{R},p)$.
\end{enumerate}
\end{lemma}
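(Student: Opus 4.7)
The plan is to treat the three parts in order, since (2) will invoke (1) and (3) will use both.

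Part (1) is essentially a tautology: any nonconstant $f\colon V\to Y$ is also a nonconstant $X$-valued map with the same norm differences and the same mean, so its Rayleigh quotient is identical in either interpretation; hence $\lambda_1(G;X,p)$ is the infimum over a larger family of maps and cannot exceed $\lambda_1(G;Y,p)$.

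For part (2) I would prove the two inequalities separately. The direction $\lambda_1(G;L_p(\Omega,X),p)\le\lambda_1(G;X,p)$ follows from (1) by embedding $X$ into $L_p(\Omega,X)$ via $x\mapsto x\cdot\chi_A$ for some fixed $A\subset\Omega$ with $0<\mu(A)<\infty$; this multiplies the norm by the constant $\mu(A)^{1/p}$, which cancels in the $p$-homogeneous Rayleigh quotient. For the opposite inequality, given $f\colon V\to L_p(\Omega,X)$ I would put $f_\omega(v):=f(v)(\omega)$, observe $m(f_\omega)=m(f)(\omega)$ a.e.\ by linearity of the mean, and apply Fubini to rewrite both the numerator and the denominator of the Rayleigh quotient for $f$ as $\int_\Omega(\cdot)\,d\mu(\omega)$ of the analogous quantities for $f_\omega$. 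Applying the definition of $\lambda_1(G;X,p)$ to $f_\omega$ pointwise in $\omega$ (both sides vanishing on the null set where $f_\omega$ is constant) and integrating then finishes this direction. The three ``in particular'' consequences come by specializing $(\Omega,\mu)$ to $([0,1],\mathrm{Lebesgue})$ or $(\mathbb{N},\mathrm{counting})$ and varying the coefficient space.

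For (3), part (2) alone gives $\lambda_1(G;\mathbb{C},p)=\lambda_1(G;L_p,p)=\lambda_1(G;\ell_p,p)$. To insert $\ell_2$ into the chain I would call on two isometric subspace embeddings: the Gaussian embedding $(a_n)\mapsto K_p^{-1}\sum a_n g_n$, with $g_n$ i.i.d.\ standard real Gaussians and $K_p=(\mathbb{E}|g|^p)^{1/p}$, which embeds $\ell_2$ isometrically into $L_p$ for every $p\in[1,\infty)$; and the identification of $\mathbb{C}$, as a real Banach space, with the Euclidean $\ell_2^2\subset\ell_2$. By (1) these yield $\lambda_1(G;\mathbb{C},p)\ge\lambda_1(G;\ell_2,p)\ge\lambda_1(G;L_p,p)$, and since the outer terms are equal, $\lambda_1(G;\ell_2,p)$ must coincide with the rest.

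The remaining step, and what I expect to be the only nontrivial ingredient, is $\lambda_1(G;\mathbb{R},p)\le\lambda_1(G;\mathbb{C},p)$ (the reverse is $\mathbb{R}\subset\mathbb{C}$ via (1)). For this I plan to use a rotation-averaging argument: for any nonconstant $f\colon V\to\mathbb{C}$, set $f_\theta(v):=\mathrm{Re}(e^{i\theta}f(v))\in\mathbb{R}$ and exploit the identity
\[
|z|^p \;=\; c_p^{-1}\int_0^{2\pi}\bigl|\mathrm{Re}(e^{i\theta}z)\bigr|^p\,d\theta, \qquad c_p:=\int_0^{2\pi}|\cos\theta|^p\,d\theta,
\]
valid for every $z\in\mathbb{C}$. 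This rewrites both the numerator and the denominator of the Rayleigh quotient for $f$ as $c_p^{-1}\int_0^{2\pi}(\cdot)\,d\theta$ of the corresponding quantities for $f_\theta$, so the ratio of the $\theta$-averages equals the Rayleigh quotient of $f$. Hence some $\theta$ must produce a nonconstant $f_\theta\colon V\to\mathbb{R}$ whose Rayleigh quotient is no larger than that of $f$, and taking the infimum over $f$ closes the chain.
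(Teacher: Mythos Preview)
Your proof is correct, and for parts~(1) and~(2) it is essentially identical to the paper's argument (the paper also deduces one inequality in~(2) from~(1) and the other by integrating the pointwise inequality for $f_\omega$ over~$\omega$).

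For part~(3) you take a slightly different route from the paper. The paper closes the loop directly over the reals: it uses the isometric embedding $\ell_2\hookrightarrow L_p(\mathbb{R})$ (citing Dor) together with~(2) applied with $X=\mathbb{R}$ to obtain
\[
\lambda_1(G;\ell_2,p)\ \ge\ \lambda_1(G;L_p(\mathbb{R}),p)\ =\ \lambda_1(G;\mathbb{R},p)\ \ge\ \lambda_1(G;\ell_2,p),
\]
the last inequality being~(1) for $\mathbb{R}\subset\ell_2$. The complex space $\mathbb{C}$ is then sandwiched between $\mathbb{R}$ and $\ell_2$ by~(1), and $L_p$, $\ell_p$ follow from~(2). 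In your argument you close the loop through the complex $L_p$, which yields $\lambda_1(G;\ell_2,p)=\lambda_1(G;\mathbb{C},p)$ but then requires the additional rotation-averaging step to reach $\mathbb{R}$. That step is valid, but it is avoidable: had you invoked~(2) with base field $\mathbb{R}$ (so that $\lambda_1(G;L_p(\mathbb{R}),p)=\lambda_1(G;\mathbb{R},p)$) and used the real Gaussian embedding $\ell_2\hookrightarrow L_p(\mathbb{R})$, the equality $\lambda_1(G;\mathbb{R},p)=\lambda_1(G;\ell_2,p)$ would fall out immediately and $\mathbb{C}$ would come for free by sandwiching. The paper's route is thus a bit more economical, while yours works just as well and makes explicit the passage from $\mathbb{C}$ to $\mathbb{R}$.
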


\begin{proof}
Item $(1)$ is obvious. On item $(2)$, ``$\geq$" is from $(1)$. On the opposite (``$\leq $") direction, observe that for any $0\ne f=(f_\omega)_{\omega\in \Omega}\colon V\to L_p(\Omega,X)$ with $m(f)=0$, 
\[
2\lambda_1(G;X,p)\sum_{v\in V} \|f_{\omega}(v)\|^p \leq \sum_{v\in V}\sum_{e=(v,w)\in E}\|f_{\omega}(v)-f_{\omega}(w)\|^p,
\]
and integrate these inequalities over $\omega\in \Omega$. On item $(3)$, $\lambda_1(G;\ell_2,p)\geq \lambda_1(G;L_p(\mathbb{R}),p)$ holds because $\ell_2$ is isometrically embeddable into $L_p(\mathbb{R})$ (see a paper of L. E. Dor \cite{Dor}) and $\lambda_1(G;L_p(\mathbb{R}),p)= \lambda_1(G;\mathbb{R},p)\geq \lambda_1 (G;\ell_2,p)$ follows from $(1)$ and $(2)$.
\end{proof}

We note that by this lemma together with the Matou\v{s}ek extrapolation, any $(X,p)$-anders are expanders.

\subsection{Coarse embedding of coarse disjoint unions}
For a sequence of finite connected graphs $\{G_n\}_n$, define a \textit{coarse disjoint union} $\coprod_n G_n$ to be an (infinite) metric space $(\coprod_n G_n ,d)$ whose point set is $\bigsqcup_n V_n$ and whose metric satisfies the following conditions.
\begin{itemize}
\item For every $n$, $d\mid_{V_n\times V_n}= d_n$, where $d_n$ denotes the (path) metric on $G_n$.
\item For $n\ne m$, $\mathrm{dist}(V_n, V_m)\geq \mathrm{diam}(G_n)+\mathrm{diam}(G_m)+n+m$.
\end{itemize}

We say $f\colon \coprod_n G_n \to X$ is a \textit{coarse embedding} if $f$ is a Lipschitz map and if moreover there exists a nondecreasing $\rho\colon \mathbb{R}_{\geq 0} \to \mathbb{R}_{\geq 0}$ with $\lim_{t\to +\infty}\rho(t) = +\infty$ such that for any $v,w \in \coprod_n G_n$, $\rho (d(v,w)) \leq \| f(v) - f(w)\|$ holds true. We define $\mathcal{R}_X(\coprod_n G_n)$ to be the class of $\rho$ with respect to which such $f$ exists, and call a member in $\mathcal{R}_X(\coprod_n G_n)$ a \textit{compression function}.

M. Gromov \cite{Gromov} observed that a coarse disjoint union of an expander family does not admit coarse embeddings into $\ell_2$. See a book \cite{BookRoe} for the argument, the main parts of where were known even before Gromov made this observation, see \cite{Matousek}. The argument can be easily adjusted to show the following fact: `` a coarse disjoint union of an $(X,p)$-ander family for some $p\in [1,\infty)$ does not admit coarse embeddings into $X$." A different proof of this fact is given in Corollary~\ref{Corollary:noCE}.

\subsection{Distortions and inequalities}
\begin{definition}
 The \textit{distortion} of $G$ into $X$, denoted by $c_X(G)$ is defined by
\[
(1\leq )c_X(G):=\inf_{f\colon V\to X,\ \mathrm{bi}-\mathrm{Lipschitz}}\|f\|_{\mathrm{Lip}}\|f^{-1}\|_{\mathrm{Lip}}.
\]
\end{definition}

A result of J. Bourgain \cite{Bourgain} states that $c_{\ell_2}(G) \precsim \log |G|$ (in fact, $\ell_2$ may be replaced with $\ell_2^{O((\log |G|)^2)}$). Note that $c_{\ell_p}(G)\leq c_{\ell_2}(G)$ because $\ell_2$ isometrically embeds $L_p$ (see also the proof of Lemma~\ref{LemmaTauConst}). Moreover, Linial--London--Rabinovich \cite{LLR} have generalized the result of Bourgain above and showed that $c_{\ell_p^{O((\log |G|)^2)}}(G) \precsim \log |G|$ for all $p\in [1,\infty)$. Grigorchuk--Nowak \cite{GN} and Jolissaint--Valette \cite{JV} have, respectively, showed inequalities that relate $(\ell_p,p)$-spectral gaps to $\ell_p$ (or $L_p$)-distortions. Here we have straightforward generalizations of these.

\begin{theorem}
[Generalized Grigorchuk--Nowak inequality]
\label{TheoremGriNo}

For any $G,X$, $p\in [1,\infty)$ and  $\epsilon \in (0,1)$, we have that
\[
c_X(G)\geq \frac{(1-\epsilon)^{1/p}r_{\epsilon}(G)}{2}\mathrm{diam}(G) \left(\frac{\lambda_1(G;X,p)}{\Delta(G)}\right)^{1/p}.
\]
Here $r_{\epsilon}(G)$ is defined as $\inf \{\mathrm{diam}(A)/\mathrm{diam}(G):|A|\geq \epsilon |V|\}$. 
\end{theorem}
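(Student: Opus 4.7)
The plan is to take an arbitrary bi-Lipschitz embedding $f\colon V\to X$, normalize so that $\|f^{-1}\|_{\mathrm{Lip}}=1$ (which leaves the product $\|f\|_{\mathrm{Lip}}\|f^{-1}\|_{\mathrm{Lip}}$ unchanged and amounts to $d(v,w)\leq\|f(v)-f(w)\|$ for all $v,w\in V$), derive a lower bound on $\|f\|_{\mathrm{Lip}}$, and then take the infimum over $f$ to recover $c_X(G)$.

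First I would apply the Poincar\'e inequality built into Definition~\ref{DefinitionSpgap}: since $d(v,w)=1$ along every edge and each vertex is the source of at most $\Delta(G)$ oriented edges, plugging $f$ into the definition of $\lambda_1$ gives
\[
2\lambda_1(G;X,p)\sum_{v\in V}\|f(v)-m(f)\|^p \ \leq\ \sum_{v\in V}\sum_{e=(v,w)\in E}\|f(w)-f(v)\|^p \ \leq\ |V|\Delta(G)\|f\|_{\mathrm{Lip}}^p.
\]

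The decisive second step is to bound the left-hand sum from below using $r_\epsilon(G)$. Set $r_0:=r_\epsilon(G)\mathrm{diam}(G)/2$, and for each $r<r_0$ consider $A_r:=\{v\in V:\|f(v)-m(f)\|\leq r\}$. For any $v,w\in A_r$ the normalization forces $d(v,w)\leq\|f(v)-f(w)\|\leq 2r<r_\epsilon(G)\mathrm{diam}(G)$, so $\mathrm{diam}(A_r)<r_\epsilon(G)\mathrm{diam}(G)$; the defining infimum of $r_\epsilon(G)$ then forces $|A_r|<\epsilon|V|$. Hence more than $(1-\epsilon)|V|$ vertices satisfy $\|f(v)-m(f)\|>r$, so $\sum_v\|f(v)-m(f)\|^p>(1-\epsilon)|V|r^p$, and letting $r\uparrow r_0$ yields the clean bound $\sum_v\|f(v)-m(f)\|^p\geq(1-\epsilon)|V|(r_\epsilon(G)\mathrm{diam}(G)/2)^p$.

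Chaining the two inequalities and taking $p$-th roots gives
\[
\|f\|_{\mathrm{Lip}} \ \geq\ 2^{1/p}\cdot\frac{(1-\epsilon)^{1/p}r_\epsilon(G)\mathrm{diam}(G)}{2}\left(\frac{\lambda_1(G;X,p)}{\Delta(G)}\right)^{1/p},
\]
which is stronger than the asserted inequality since $2^{1/p}\geq 1$, so passing to the infimum over $f$ concludes the argument. No step presents a genuine obstacle: the argument follows the template of Grigorchuk--Nowak \cite{GN}, and the only care required is in the scaling reduction, in the $r\uparrow r_0$ limit (needed because $r_\epsilon(G)$ is defined by an infimum), and in not dropping the factor $\tfrac12$ hidden in the definition of $\lambda_1$.
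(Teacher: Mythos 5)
Your proof is correct and is essentially the same argument as the paper's, which simply replaces $f$ by $f-m(f)$ and defers to the original proof in \cite{GN}; you have written that argument out in full, correctly handling the mean-subtraction, the normalization $\|f^{-1}\|_{\mathrm{Lip}}=1$, and the factor $\tfrac{1}{2}$ in the definition of $\lambda_1$. The extra factor $2^{1/p}$ you obtain (making your bound slightly stronger than stated) is consistent with the paper's remark that its spectral-gap normalization differs from that of \cite{GN} by a factor between $1$ and $2^{p}$.
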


\begin{theorem}
[Generalized Jolissaint--Valette inequality]
\label{TheoremJoVa}

For any $G,X$, and $p\in [1,\infty)$, we have that 
\[
c_X(G)\geq 2^{-(p-1)/p}D(G) \left(\frac{\lambda_1(G;X,p)}{k(G)}\right)^{1/p}.
\]
Here $k(G)$ is the average degree of $G$ and $D(G)$ is the maximal displacement, namely, $\max_{\alpha \in \mathrm{Perm}(V)}\min_{v\in V}d(v,\alpha\cdot v)$, where $\mathrm{Perm}(V)$ denotes the group of all permutations on $V$. In particular, if $G$ is a vertex-transitive graph, then
\[
c_X(G)\succsim \mathrm{diam}(G) \left(\frac{\lambda_1(G;X,p)}{\Delta(G)}\right)^{1/p}.
\]
\end{theorem}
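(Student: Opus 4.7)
The plan is to test the Banach Poincar\'e inequality on an arbitrary bi-Lipschitz embedding $f\colon V\to X$, and to extract a lower bound on its denominator from the displacement data. I would begin by translating $f$ so that $m(f)=0$ (which does not affect the Lipschitz constants) and abbreviating $L:=\|f\|_{\mathrm{Lip}}$ and $\ell:=\|f^{-1}\|_{\mathrm{Lip}}^{-1}$, so that $c_X(G)$ equals the infimum of $L/\ell$ over such $f$. Next, I would choose a permutation $\alpha\in \mathrm{Perm}(V)$ attaining the supremum in the definition of $D(G)$, so that $d(v,\alpha(v))\geq D(G)$ for every $v\in V$.

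The heart of the proof is the two-sided estimate
\[
\ell^p|V|D(G)^p \;\leq\; \sum_{v\in V}\|f(v)-f(\alpha(v))\|^p \;\leq\; 2^{p-1}\sum_{v\in V}\bigl(\|f(v)\|^p+\|f(\alpha(v))\|^p\bigr) \;=\; 2^p\sum_{v\in V}\|f(v)\|^p,
\]
in which the leftmost inequality is the lower Lipschitz bound applied term-by-term together with $d(v,\alpha(v))\geq D(G)$, the middle inequality is the convexity of $t\mapsto t^p$, and the final equality uses that $\alpha$ is a bijection of $V$. Rearranging yields $\sum_{v}\|f(v)\|^p\geq 2^{-p}\ell^p|V|D(G)^p$.

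Feeding this into the Poincar\'e inequality
\[
\sum_{v\in V}\sum_{e=(v,w)\in E}\|f(v)-f(w)\|^p \;\geq\; 2\lambda_1(G;X,p)\sum_{v\in V}\|f(v)\|^p,
\]
and combining with the elementary upper bound $\sum_{e\in E}\|f(v)-f(w)\|^p\leq L^p|E|=L^p|V|k(G)$ (each edge has unit length, and $|E|=\sum_v\deg(v)=|V|k(G)$), I obtain $L^p|V|k(G)\geq 2^{-(p-1)}\lambda_1(G;X,p)\ell^p|V|D(G)^p$. Solving for $L/\ell$ and infimizing over bi-Lipschitz $f$ delivers the displayed inequality. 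For the final clause, a vertex-transitive graph is regular (so $k(G)=\Delta(G)$) and satisfies $D(G)\succsim\mathrm{diam}(G)$: for a Cayley graph this is immediate since left translation by a group element of maximal word length moves every vertex by exactly the diameter, and a standard averaging argument handles the general vertex-transitive case.

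This argument is simply the $L_p$-analog of the Hilbert-space Jolissaint--Valette proof, with the parallelogram identity replaced by the $p$-convexity inequality $|a+b|^p\leq 2^{p-1}(|a|^p+|b|^p)$, which is precisely why the constant degrades to $2^{(p-1)/p}$. Accordingly I do not anticipate any real obstacle beyond the bookkeeping sketched above; the only mildly delicate point is the choice of the displacement-maximizing permutation $\alpha$, but its role is purely to convert a lower Lipschitz bound into a lower bound on $\sum_v\|f(v)-m(f)\|^p$.
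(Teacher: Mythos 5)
Your argument is correct and is essentially the paper's own proof: the paper simply replaces $f$ by $f-m(f)$ and invokes the original Jolissaint--Valette computation, which is exactly the permutation/convexity estimate you carry out ($\ell^p|V|D(G)^p\leq 2^p\sum_v\|f(v)\|^p$ combined with the Poincar\'e inequality and $|E|=|V|k(G)$), yielding the same constant $2^{-(p-1)/p}$. The only piece you leave as a sketch is the displacement bound $D(G)\succsim\mathrm{diam}(G)$ for vertex-transitive graphs (where, for Cayley graphs with edges $\{g,gs\}$, it is right rather than left translation that displaces every vertex by the word length), but this is precisely the known Jolissaint--Valette lemma which the paper likewise cites without proof.
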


\begin{proof}
For the proofs of the above two theorems, we need to replace $f$ with $f-m(f)$ for a biLipschitz map $f\colon V\to X$. Then the original proofs in \cite{GN} and \cite{JV} work without any modification. Note that, in their papers, they use a different definition of $\ell_p$-spectral gap. This one is at least $\lambda_1(G;\mathbb{R},p)$ and at most $2^p\lambda_1(G;\mathbb{R},p)$.
\end{proof}

\begin{corollary}\label{CorollaryDist}
Let $\{G_n\}_n$ be $(X,p)$-anders. Then we have $c_X(G_n) \succsim_X \mathrm{diam}(G_n)$. If moreover $X$ is infinite-dimensional, then  $c_{X}(G_n) \asymp_X \mathrm{diam}(G_n)$.
\end{corollary}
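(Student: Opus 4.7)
The plan is to combine the generalized Grigorchuk--Nowak inequality (Theorem~\ref{TheoremGriNo}) for the lower bound with Bourgain's embedding theorem together with Dvoretzky's theorem for the upper bound.

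For the lower bound $c_X(G_n)\succsim_X \mathrm{diam}(G_n)$, I would fix $\epsilon=1/2$ and apply Theorem~\ref{TheoremGriNo} to each $G_n$. By the very definition of an $(X,p)$-ander family, $\sup_n\Delta(G_n)<\infty$ and $\inf_n\lambda_1(G_n;X,p)>0$, so the factor $(\lambda_1(G_n;X,p)/\Delta(G_n))^{1/p}$ is bounded below by a positive constant depending on $X$ and $p$. It then remains to show that $r_{1/2}(G_n)\succsim 1$. For this, any $A\subseteq V_n$ with $\mathrm{diam}(A)=d$ sits inside a ball of radius $d$, whence $|A|\leq (\Delta(G_n)+1)^{d+1}$; imposing $|A|\geq |V_n|/2$ yields $d\succsim \log|V_n|$ with implicit constant depending only on the uniform degree bound. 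Since any $(X,p)$-ander family is in particular a family of expanders (by the remark following Lemma~\ref{LemmaSpGap}), the standard estimate $\mathrm{diam}(G_n)\precsim \log|V_n|$ for bounded-degree expanders gives $d\succsim \mathrm{diam}(G_n)$, and hence $r_{1/2}(G_n)\succsim 1$.

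For the upper bound $c_X(G_n)\precsim_X \mathrm{diam}(G_n)$ when $X$ is infinite-dimensional, I would first invoke Bourgain's theorem (or the sharper Linial--London--Rabinovich version already cited in the paper) to embed $G_n$ biLipschitzly into $\ell_2^{m_n}$ with $m_n=O((\log|V_n|)^2)$ and distortion $\precsim \log|V_n|$. Dvoretzky's theorem then provides an embedding of $\ell_2^{m_n}$ into any infinite-dimensional Banach space $X$ with distortion at most $2$. Composing yields $c_X(G_n)\precsim \log|V_n|$. Finally, the bounded degree hypothesis gives $|V_n|\leq (\Delta(G_n)+1)^{\mathrm{diam}(G_n)+1}$, whence $\log|V_n|\precsim \mathrm{diam}(G_n)$, closing the upper bound and proving the asymptotic equivalence.

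The main subtle point is the verification that $r_{1/2}(G_n)$ stays bounded below: this step genuinely relies on \emph{both} the bounded degree constraint (to bound ball volumes from above) and the characteristic logarithmic diameter of bounded-degree expanders, so the argument crucially leans on the fact, recorded after Lemma~\ref{LemmaSpGap}, that an $(X,p)$-ander family is automatically an expander family via Matou\v{s}ek's extrapolation.
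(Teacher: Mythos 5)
Your proposal is correct and takes essentially the same route as the paper: the lower bound comes from Theorem~\ref{TheoremGriNo}, using the uniform degree and spectral-gap bounds together with the exponential growth of the (automatically expander) family to bound $r_{1/2}(G_n)$ from below, and the upper bound comes from Bourgain's embedding combined with Dvoretzky's theorem (as in Remark~\ref{remark:referee}) and the estimate $\log|V_n|\precsim \mathrm{diam}(G_n)$. You simply spell out the details that the paper's two-line proof leaves implicit.
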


\begin{remark}\label{remark:referee}
The referee has pointed out that if $X$ is infinite-dimensional, then by Dvoretzky's theorem, the spaces $\{\ell_2^m\}_{m\geq 1}$ embed into $X$ with uniformly bounded distortions. This comment has improved the statement of Corollary~\ref{CorollaryDist}.
\end{remark}

\begin{proof}
Note that $\{G_n\}_n$ is in particular a family of expanders and is of exponential growth. Hence the first assertion follows from Theorem~\ref{TheoremGriNo}. The second one is from the first one together with the aforementioned result of Bourgain (see also Remark~\ref{remark:referee}). 
\end{proof}

\subsection{Distortions and compression function}
The following lemma is a generalization of a special case of Austin's lemma \cite[Lemma~3.1]{Austin}, which is used in \cite{MS} to obtain some restriction to $\mathcal{R}_X( \coprod_n G_n )$ for certain explicit $\{G_n\}_n$'s. 
\begin{lemma}
\label{LemmaAustin}
Let $\{G_n\}_n$ be a sequence of finite connected graphs with $\mathrm{diam}(G_n)\nearrow \infty$ $($possibly with $\sup_n \Delta(G_n)= \infty)$. Let $\rho \colon \mathbb{R}_+ \nearrow \mathbb{R}_+$ be a map with $\lim_{t\to +\infty }\rho(t) =+\infty$ which satisfies that $\rho(t)/t$ is nonincreasing for $t$ large enough. For any $X$, if for $n$ large enough $\frac{\mathrm{diam}(G_n)}{\rho (\mathrm{diam}(G_n))} \precnsim c_X(G_n)$ 
holds, then we have that $\rho \not \in \mathcal{R}_X( \coprod_n G_n )$.
\end{lemma}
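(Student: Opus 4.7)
The plan is to proceed by contradiction: assume $\rho\in\mathcal{R}_X(\coprod_n G_n)$ and use the hypothesis on the growth of $\rho$ to upgrade a coarse embedding into a collection of uniformly bi-Lipschitz embeddings of each $G_n$, yielding an upper bound on $c_X(G_n)$ that contradicts the lower bound $\mathrm{diam}(G_n)/\rho(\mathrm{diam}(G_n))\precnsim c_X(G_n)$.

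More concretely, if $\rho\in \mathcal{R}_X(\coprod_n G_n)$, then by definition there exists a Lipschitz map $f\colon \coprod_n G_n\to X$, with Lipschitz constant $L<\infty$, such that $\|f(v)-f(w)\|\geq \rho(d(v,w))$ for all $v,w$. First I would restrict $f$ to each $G_n$ and obtain a map $f_n\colon V_n\to X$ satisfying $\|f_n\|_{\mathrm{Lip}}\leq L$ and $\|f_n(v)-f_n(w)\|\geq \rho(d_n(v,w))$ for all $v\ne w$ in $V_n$. Consequently
\[
c_X(G_n)\;\leq\; \|f_n\|_{\mathrm{Lip}}\,\|f_n^{-1}\|_{\mathrm{Lip}}\;\leq\; L\cdot\sup_{v\neq w\,\in V_n}\frac{d_n(v,w)}{\rho(d_n(v,w))}.
\]

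Next I would control the right-hand side using the shape assumption on $\rho$. Let $T_0$ be a constant such that $\rho(t)/t$ is nonincreasing on $[T_0,\infty)$; equivalently, $t/\rho(t)$ is nondecreasing on this range. For pairs with $1\leq d_n(v,w)<T_0$ the ratio $d_n(v,w)/\rho(d_n(v,w))$ is bounded by a constant $C_0:=T_0/\rho(1)$ (using that $\rho$ is nondecreasing and strictly positive on $\mathbb{R}_+$), while for $d_n(v,w)\geq T_0$ the monotonicity gives $d_n(v,w)/\rho(d_n(v,w))\leq \mathrm{diam}(G_n)/\rho(\mathrm{diam}(G_n))$. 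Hence, for all $n$ large enough that $\mathrm{diam}(G_n)\geq T_0$,
\[
c_X(G_n)\;\leq\; L\cdot\max\!\left(C_0,\;\frac{\mathrm{diam}(G_n)}{\rho(\mathrm{diam}(G_n))}\right).
\]

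Finally I would derive the contradiction. Because $\rho(t)/t$ is eventually nonincreasing, $\rho$ grows at most linearly, so $\mathrm{diam}(G_n)/\rho(\mathrm{diam}(G_n))\geq T_0/\rho(T_0)>0$ for $n$ large; in particular this quantity is bounded below by a positive constant. Thus the displayed bound yields $c_X(G_n)\precsim \mathrm{diam}(G_n)/\rho(\mathrm{diam}(G_n))$. Combined with the standing hypothesis $\mathrm{diam}(G_n)/\rho(\mathrm{diam}(G_n))\precsim c_X(G_n)$, we would conclude $c_X(G_n)\asymp \mathrm{diam}(G_n)/\rho(\mathrm{diam}(G_n))$, directly contradicting the strict comparison $\precnsim$ assumed in the statement. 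The main subtle point is precisely this bookkeeping of the ``small-distance'' regime so that the supremum is genuinely governed by the diameter; the rest is essentially a soft argument, and since $\mathrm{diam}(G_n)\nearrow\infty$ we are always eventually in the range where the monotonicity of $\rho(t)/t$ kicks in.
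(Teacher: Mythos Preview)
Your argument is correct and follows essentially the same route as the paper: assume a coarse embedding with compression $\rho$, restrict to each $G_n$, bound $c_X(G_n)$ by $\|f_n\|_{\mathrm{Lip}}\cdot\sup_{v\ne w}d_n(v,w)/\rho(d_n(v,w))$, and use the eventual monotonicity of $t/\rho(t)$ to show this is $\precsim \mathrm{diam}(G_n)/\rho(\mathrm{diam}(G_n))$, contradicting $\precnsim$. Your explicit handling of the small-distance regime (the constant $C_0$) is more careful than the paper, which absorbs this into the $\precsim$ notation.
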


\begin{proof}
Suppose to the contrary that there exists a coarse embedding $f\colon \coprod_n G_n \to X$ such that 
$$
\rho (d(v,w))\precsim \| f(v)-f(w)\|, \quad v,w \in \coprod_n G_n 
$$
holds. Set $f_n:=f\mid_{G_n} \colon V_n \to X$. We may assume that $f$ is a $1$-Lipschitz map and that each $f_n$ is biLipschitz. Then  we have the following order inequalities.
\begin{align*} 
\frac{\mathrm{diam}(G_n)}{\rho (\mathrm{diam}(G_n))} 
&\precnsim c_X(G_n) \leq \|f_n^{-1}\|_{\mathrm{Lip}} 
\leq \max_{v\ne w\in V_n}\frac{d(v,w)}{\|f_n(v)-f_n(w)\|} \\
&\precsim \max_{v\ne w\in V_n}\frac{d(v,w)}{\rho(d(v,w))} 
 \precsim \frac{\mathrm{diam}(G_n)}{\rho(\mathrm{diam}(G_n))}.
\end{align*}
This is a contradiction.
\end{proof}

\begin{corollary}\label{Corollary:noCE}
Let $\{G_n\}_n$ be a sequence of finite connected vertex-transitive graphs $($possibly with $\sup_n \Delta(G_n) =\infty)$. Let $X$ be a Banach space. Assume that $\mathrm{diam}(G_n)\nearrow \infty$ as $n\to \infty$ and that $\inf_n \lambda_1(G_n;X,p)/\Delta(G_n) >0$ for some $p\in [1,\infty)$. Then $\coprod_n G_n$ does not coarsely embed into $X$.
\end{corollary}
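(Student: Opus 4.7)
The plan is to combine the vertex-transitive case of the Generalized Jolissaint--Valette inequality (Theorem~\ref{TheoremJoVa}) with Lemma~\ref{LemmaAustin}. Applying the ``in particular" clause of Theorem~\ref{TheoremJoVa} together with the hypothesis $\inf_n \lambda_1(G_n;X,p)/\Delta(G_n) > 0$ immediately yields the distortion lower bound $c_X(G_n) \succsim \mathrm{diam}(G_n)$, so in particular $c_X(G_n) \to \infty$ at the rate of the diameters.

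Next, I would argue by contradiction. Suppose $f\colon \coprod_n G_n \to X$ is a coarse embedding; after rescaling, I may assume $f$ is $1$-Lipschitz, with some compression function $\rho$. Before invoking Lemma~\ref{LemmaAustin}, I regularize $\rho$ to obtain $\tilde\rho \leq \rho$ which is nondecreasing, still tends to $+\infty$, and additionally has $\tilde\rho(t)/t$ nonincreasing for $t$ sufficiently large. The same $f$ remains a coarse embedding with compression $\tilde\rho$, so $\tilde\rho \in \mathcal{R}_X(\coprod_n G_n)$.

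To derive a contradiction via Lemma~\ref{LemmaAustin} applied to $\tilde\rho$, I check that $\mathrm{diam}(G_n)/\tilde\rho(\mathrm{diam}(G_n)) \precnsim c_X(G_n)$. Since $\tilde\rho(\mathrm{diam}(G_n))$ tends to infinity, the $\precsim$ direction is immediate from
\[
\frac{\mathrm{diam}(G_n)}{\tilde\rho(\mathrm{diam}(G_n))} \precsim \mathrm{diam}(G_n) \precsim c_X(G_n).
\]
The strict ($\not\succsim$) direction follows from
\[
\frac{\mathrm{diam}(G_n)/\tilde\rho(\mathrm{diam}(G_n))}{c_X(G_n)} \precsim \frac{1}{\tilde\rho(\mathrm{diam}(G_n))} \xrightarrow[n\to \infty]{} 0,
\]
where the first estimate uses $c_X(G_n) \succsim \mathrm{diam}(G_n)$ from Step~1. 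Lemma~\ref{LemmaAustin} then delivers $\tilde\rho \notin \mathcal{R}_X(\coprod_n G_n)$, the desired contradiction.

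The one delicate step is the regularization: a generic nondecreasing $\rho \to \infty$ need not have $\rho(t)/t$ eventually monotone, and the naive largest concave minorant of $\rho$ can collapse to zero. The standard workaround is to pick a sparse sequence $t_k \nearrow \infty$ with $\rho(t_k) \geq 2^k$, spaced out enough that the secants through consecutive points $(t_k, 2^k)$ have decreasing slopes, and take $\tilde\rho$ to be the resulting piecewise-linear concave interpolant (extended by $0$ on $[0,t_1]$). Then $\tilde\rho$ is concave, nondecreasing, bounded above by $\rho$, and $\tilde\rho(t_k)=2^k\to \infty$. Once $\tilde\rho$ is in hand, the rest is a short definition chase.
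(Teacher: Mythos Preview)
Your approach is correct and essentially identical to the paper's: both argue by contradiction, regularize the compression function so that $\rho(t)/t$ is eventually nonincreasing, and then invoke Lemma~\ref{LemmaAustin}, with the needed distortion bound $c_X(G_n)\succsim\mathrm{diam}(G_n)$ supplied by the vertex-transitive case of Theorem~\ref{TheoremJoVa}. The paper's proof is simply more terse (it leaves both the regularization and the verification of the hypothesis of Lemma~\ref{LemmaAustin} implicit), so your write-up just fills in those details; one tiny slip is that your piecewise-linear $\tilde\rho$ as described only satisfies $\tilde\rho\le 2\rho$ rather than $\tilde\rho\le\rho$, but replacing $\tilde\rho$ by $\tilde\rho/2$ (or choosing $t_k$ with $\rho(t_k)\ge 2^{k+1}$) fixes this immediately.
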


\begin{proof}
Suppose to the contrary that $\mathcal{R}_X(\coprod_n G_n) \ne \emptyset$. Then by replacing $\rho$ with smaller function if necessary, we have $\rho \in \mathcal{R}_X(\coprod_n G_n)$ with $\rho(t)/t$ nonincreasing for large $t$. Then Lemma~\ref{LemmaAustin} applies and we would have that $\rho \not \in \mathcal{R}_X(\coprod_n G_n)$. 
\end{proof}

\section{$(\tau)$-type constants and sphere equivalence}\label{SectionTau}
A key to the proof of Theorem~\ref{mthma} for Schreier coset graphs is a certain representation-theoretic constant, as we shall define below. In this section, let $\Gamma$ be a finitely generated group, $S\not \ni e$ be a symmetric finite generating set of $\Gamma$, and $H$ be a subgroup of $\Gamma$ of finite index. By $\mathrm{Sch}(\Gamma, H,S)$ we mean the \textit{Schreier coset graph}, where we take left cosets as vertices.

\subsection{$(\tau)$-type constants}
\begin{definition}
\begin{enumerate}[$(i)$]
  \item Let $(\Gamma, H,S)$ be as in this section.
\begin{enumerate}[$(1)$]
  \item Define $\pi_{X,p}$ as the quasi-regular representation of $\Gamma$ on $\ell_p(\Gamma/H ,\tilde{X}_{(p)})$, namely, for $\gamma \in \Gamma$ and $\xi \in \ell_p(\Gamma/H ,\tilde{X}_{(p)})$, $ \pi_{X,p}(\gamma) \xi (xH):= \xi (\gamma^{-1}xH)$. 
Then $\ell_p(\Gamma/H ,\tilde{X}_{(p)})$ decomposes as $\Gamma$-representation spaces: $\ell_p(\Gamma/H ,\tilde{X}_{(p)})= \ell_p(\Gamma/H ,\tilde{X}_{(p)})^{\pi_{X,p}(\Gamma)}\oplus \ell_{p,0}(\Gamma/H ,\tilde{X}_{(p)})$. 
Here the first space is the space of $\pi_{X,p}(\Gamma)$-invariant vectors, and the second space is the space of ``zero-sum" functions, namely, $\ell_{p,0}(\Gamma/H ,\tilde{X}_{(p)}):=\{\xi \in \ell_{p}(\Gamma/H ,\tilde{X}_{(p)}): \sum_{v \in \Gamma/H} \xi(v) =0\}$.

We use the same  symbol $\pi_{X,p}$ for the restricted representation on $\ell_{p,0}(\Gamma/H ,\tilde{X}_{(p)})$.
  \item ($p$-displacement constant) The $p$\textit{-displacement constant} of $(\Gamma,H,S)$ on $X$, written as $\kappa_{X,p} (\Gamma, H, S)$, is defined as 
\[
\kappa_{X,p}(\Gamma,H,S):= \inf_{0\ne \xi \in \ell_{p,0}(\Gamma /H ,\tilde{X}_{(p)})}\sup_{s\in S}\frac{\| \pi_{X,p}(s)\xi -\xi\|}{\|\xi \|}.
\]
 \end{enumerate}
\item ($(\tau)$-type constant) Let $\Gamma$ be a finitely generated group and $S\not \ni e$ be a symmetric generating set. Then the \textit{$p$-$(\tau)$-type constant} of $(\Gamma, S)$ on $X$, written as $\kappa^{(\tau)}_{X,p}(\Gamma, S)$, is defined by $\kappa^{(\tau)}_{X,p}(\Gamma, S):=\inf_{N}\kappa_{X,p}(\Gamma, N,S)$. Here $N$ runs over all finite index subgroups of $\Gamma$.

\end{enumerate}
We omit writing $p$ if $p$ is fixed.
\end{definition}

The terminology ``$(\tau)$-type constant" is inspired by the relation with \textit{property} $(\tau)$, see \cite{BookLubotzkyZuk}.

\begin{lemma}\label{LemmaTauConst}
Let $(\Gamma,H,S)$ be as in this subsection.
\begin{enumerate}[$(1)$]
  \item Let $X$ be a Banach space and $Y$ be a $($Banach$)$ subspace of $X$. Then for any $p\in [1,\infty)$, $\kappa_{X,p}(\Gamma,H,S) \leq \kappa_{Y,p}(\Gamma,H,S)$ holds.
  \item For any $p\in [1,\infty)$ and any $q\in [1,\infty)$, $\kappa_{\ell_p,q}(\Gamma,H,S) =\kappa_{L_p,q}(\Gamma,H,S)$.
\end{enumerate}
\end{lemma}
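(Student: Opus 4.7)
The plan is to treat both parts as variational statements and exploit L.~E.~Dor's isometric embedding $\ell_p\hookrightarrow L_p$ (already used in the proof of Lemma~\ref{LemmaSpGap}) to reduce item $(2)$ to item $(1)$ plus a density argument.

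For $(1)$, a linear isometric embedding $Y\hookrightarrow X$ induces coordinatewise an isometric embedding $\tilde{Y}_{(p)}\hookrightarrow\tilde{X}_{(p)}$ and hence a $\Gamma$-equivariant isometric embedding $\ell_p(\Gamma/H,\tilde{Y}_{(p)})\hookrightarrow\ell_p(\Gamma/H,\tilde{X}_{(p)})$ which restricts to the zero-sum subspaces. The infimum defining $\kappa_{X,p}(\Gamma,H,S)$ is therefore taken over a larger class of test vectors than the one defining $\kappa_{Y,p}(\Gamma,H,S)$, so $\kappa_{X,p}\leq\kappa_{Y,p}$. This is the displacement-constant analogue of Lemma~\ref{LemmaSpGap}(1).

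For $(2)$, Dor's embedding gives $\kappa_{L_p,q}\leq\kappa_{\ell_p,q}$ immediately from $(1)$. For the reverse inequality I fix $\varepsilon>0$ and a nonzero $\xi\in\ell_{q,0}(\Gamma/H,\ell_q(\mathbb{N},L_p))$ whose displacement ratio is within $\varepsilon$ of $\kappa_{L_p,q}$, and construct a close approximant $\tilde{\xi}$ with values in $\ell_q(\mathbb{N},\ell_p)$. First I truncate the inner $\mathbb{N}$-index to a finite range $\{1,\ldots,N\}$; since $\Gamma/H$ is finite, this costs at most $\varepsilon\|\xi\|$ for $N$ large enough. Only finitely many $L_p$-functions then appear as coordinates of $\xi$, and each can be approximated in $L_p$-norm by a step function relative to a common finite partition of $[0,1]$; the linear span of the characteristic functions of such a partition is isometrically $\ell_p^m$ for some $m$, which embeds isometrically into $\ell_p$. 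This produces a close approximant $\tilde{\xi}$ with values in $\ell_q^N(\ell_p^m)\subseteq\ell_q(\mathbb{N},\ell_p)$. Finally I subtract the mean $m(\tilde{\xi}):=\sum_{v\in \Gamma/H}\tilde{\xi}(v)/|\Gamma/H|$ to restore the zero-sum condition; since constants are fixed by the quasi-regular action this leaves $\pi_{\ell_p,q}(s)\tilde{\xi}-\tilde{\xi}$ unchanged, and since $m(\xi)=0$ the corrector $m(\tilde{\xi})$ has norm $O(\varepsilon\|\xi\|)$. Letting $\varepsilon\to 0$ yields $\kappa_{\ell_p,q}\leq\kappa_{L_p,q}$.

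The only delicate point I anticipate is bookkeeping the three successive errors (truncation, step-function approximation, and re-centering) so that the numerator $\sup_{s\in S}\|\pi(s)\tilde{\xi}-\tilde{\xi}\|$ changes by $O(\varepsilon\|\xi\|)$ while $\|\tilde{\xi}\|$ stays within $O(\varepsilon)$ of $\|\xi\|$, keeping the displacement ratio of $\tilde{\xi}$ within $O(\varepsilon)$ of that of $\xi$. Each control is a direct application of the triangle inequality together with the fact that $\pi_{\bullet,q}(s)$ is an isometry, so no essential obstacle is expected.
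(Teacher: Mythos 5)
Your proposal is correct and follows essentially the same route as the paper, which disposes of item $(1)$ as straightforward and proves item $(2)$ by exactly the step-function approximation argument (citing \cite{JV}) that you carry out in detail, combined with the subspace inequality of item $(1)$ for the easy direction. One small attribution slip: the isometric inclusion $\ell_p\hookrightarrow L_p$ you use is the elementary one (disjointly supported normalized indicator functions), not Dor's theorem, which in this paper is invoked only for the embedding $\ell_2\hookrightarrow L_p$; the mathematical content of your argument is unaffected.
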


\begin{proof}
Item $(1)$ is straightforward. Item $(2)$ follows from an argument (see \cite{JV}) of approximating $L_p$-functions by step functions. 
\end{proof}

The following lemma is well known for the case where $(X,p)=(\ell_2,2)$. We however include the proof for the reader's convenience.

\begin{lemma}\label{LemmaDisp}
For a Schreier coset graph $G=\mathrm{Sch}(\Gamma,H,S)$ and a pair $(X,p)$, we have that 
\[
\kappa_{X,p}(\Gamma ,H,S)^p \leq \lambda_1(G;X,p) \leq \frac{|S|}{2} \kappa_{X,p}(\Gamma ,H,S)^p.
\]
\end{lemma}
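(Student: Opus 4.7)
The plan is to recognize that both $\lambda_1(G;X,p)$ and $\kappa_{X,p}(\Gamma,H,S)^p$ are variational quantities attached to the same representation $\pi_{X,p}$, differing only in whether one sums or takes a supremum over the generator set $S$. Since $\sup_{s\in S} T_s \leq \sum_{s\in S} T_s \leq |S|\sup_{s\in S} T_s$ for any nonnegative family $(T_s)_{s\in S}$, both inequalities of the lemma should fall out of a single algebraic identification of the edge-sum in the Rayleigh quotient with $\sum_{s\in S}\|\pi_{X,p}(s)\xi - \xi\|^p$.

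First, I would apply Lemma~\ref{LemmaSpGap}$(2)$ to rewrite $\lambda_1(G;X,p)$ as $\lambda_1(G;\tilde{X}_{(p)},p)$. This preparatory step is crucial: it lets the test functions $f\colon V \to \tilde{X}_{(p)}$ in the Rayleigh quotient live in exactly the space $\ell_p(\Gamma/H,\tilde{X}_{(p)})$ on which $\pi_{X,p}$ acts. Subtracting the mean, each nonconstant $f$ corresponds to a nonzero $\xi := f - m(f) \in \ell_{p,0}(\Gamma/H, \tilde{X}_{(p)})$, which is precisely the class over which $\kappa_{X,p}$ is defined; the denominator of the Rayleigh quotient becomes $\|\xi\|^p$.

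Next, I would perform the key algebraic identification. The oriented edges of $\mathrm{Sch}(\Gamma,H,S)$ are precisely the pairs $(v, sv)$ with $v \in \Gamma/H$, $s \in S$. Using the formula $\pi_{X,p}(s)\xi(v) = \xi(s^{-1}v)$, a change of variable $u = s^{-1}v$, and the symmetry $S = S^{-1}$, one obtains
\[
\sum_{v\in V}\sum_{(v,w)\in E}\|f(w)-f(v)\|^p \;=\; \sum_{s\in S}\|\pi_{X,p}(s)\xi - \xi\|^p.
\]

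It remains to invoke the sandwich $\sup_s T_s \leq \sum_s T_s \leq |S|\sup_s T_s$ with $T_s := \|\pi_{X,p}(s)\xi - \xi\|^p$. Dividing by $\|\xi\|^p$, the left inequality together with the definition of $\kappa_{X,p}$ and the $\tfrac{1}{2}$ in the definition of $\lambda_1$ yields the lower bound; for the upper bound, I would feed a near-minimizer of the $\kappa_{X,p}$-optimization into the Rayleigh quotient, invoke the right-hand inequality, and let the approximation parameter tend to $0$. I do not expect any serious obstacle beyond the bookkeeping, but the subtle point worth flagging is the $p$-stabilization step, without which the Rayleigh-quotient optimization would range over a strictly smaller class of vectors than the $\kappa_{X,p}$-optimization and the upper bound could fail.
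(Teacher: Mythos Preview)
Your approach is essentially the paper's: pass to $\tilde{X}_{(p)}$ via Lemma~\ref{LemmaSpGap}$(2)$, identify the Rayleigh numerator with $\sum_{s\in S}\|\pi_{X,p}(s)\xi-\xi\|^p$, and compare the sum with the supremum over $S$. The upper bound is exactly as in the paper.

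There is one slip in the lower bound. Your sandwich $\sup_{s}T_s\le \sum_{s}T_s$ combined with the factor $\tfrac{1}{2}$ in the definition of $\lambda_1$ only yields
\[
\lambda_1(G;X,p)=\tfrac{1}{2}\inf_{\xi}\sum_{s\in S}T_s(\xi)\ \ge\ \tfrac{1}{2}\inf_{\xi}\sup_{s\in S}T_s(\xi)=\tfrac{1}{2}\,\kappa_{X,p}(\Gamma,H,S)^p,
\]
which is a factor of $2$ short of the stated bound $\kappa_{X,p}^p\le\lambda_1$. The paper recovers this factor with the observation (which it singles out explicitly) that $\|\pi_{X,p}(s)\xi-\xi\|=\|\pi_{X,p}(s^{-1})\xi-\xi\|$: since $S=S^{-1}$, the generator $s^\ast$ realizing the supremum is paired with $(s^\ast)^{-1}$, so the maximal term is counted twice in $\sum_{s\in S}T_s$, giving $\sum_{s}T_s\ge 2\sup_{s}T_s$ and hence $\lambda_1\ge\kappa_{X,p}^p$. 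You used $S=S^{-1}$ in the edge-sum identification but not here; adding this one line completes your argument.
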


\begin{proof}
First note that by Lemma~\ref{LemmaSpGap} $\lambda_1(G;X,p)= \lambda_1(G;\tilde{X}_{(p)},p)$. Take a nonconstant map $f\colon V\to \tilde{X}_{(p)}$ and by replacing $f$ with $f-m(f)$ we may assume $m(f)=0$. Then we may regard $f$ as a nonzero vector $\xi \in \ell_{p,0}(\Gamma/H ,\tilde{X}_{(p)})$. Therefore
\begin{align*}
\lambda_1(G;X,p)&= \frac{1}{2}\inf_{0\ne \xi \in \ell_{p,0}(\Gamma/H ,\tilde{X}_{(p)})} \frac{\sum_{v\in \Gamma/H} \sum_{s^{-1}\in S}\|\pi_{X,p}(s)\xi(v)-\xi(v) \|_{\tilde{X}_{(p)}}^p}{\|\xi\|^p}\\
&= \frac{1}{2}\inf_{0\ne \xi \in \ell_{p,0}(\Gamma/H ,\tilde{X}_{(p)})} \sum_{s\in S}\left(\frac{\|\pi_{X,p}(s)\xi-\xi \|}{\|\xi\|}\right)^p.
\end{align*}
This ends our proof (note that $\|\pi_{X,p}(s)\xi-\xi \|=\|\pi_{X,p}(s^{-1})\xi-\xi \|$).
\end{proof}

If we consider a sequence of Schreier coset graphs with nonuniformly bounded degree, then the estimations in Lemma~\ref{LemmaDisp} do not give the optimal-order inequalities. Pak--\.{Z}uk \cite[Proposition~2]{PZ}, however, have sharpened the left-hand side of the inequalities for $(X,p)=(\ell_2,2)$ if the finite generating set $S$ has an enormous symmetry. Here we present a generalization of their result in general setting.

\begin{theorem}[Generalized Pak--\.{Z}uk theorem]
\label{TheoremPZ}

Let $\Gamma$ be a finitely generated group, $S\not\ni e$ be a symmetric finite generating subset, and $(X,p)$ be a pair. Assume that there exists a finite $S$-preserving subgroup $Q\leqslant \mathrm{Aut}(\Gamma) $. Let us denote by $S_1,\dots ,S_m$ the partition of $S$ into orbits of $Q$. Define $\nu:=\max_{1\leq i \leq m} \frac{|S|}{|S_i|}$. 

Then we have for any subgroup $H$ in $\Gamma$ of finite index that 
\[
\frac{|S|}{2\nu}\kappa_{X,p}(\Gamma ,H,S)^p \leq \lambda_1(\mathrm{Sch}(\Gamma,H, S);X,p) \leq \frac{|S|}{2} \kappa_{X,p}(\Gamma ,H,S)^p.
\]
\end{theorem}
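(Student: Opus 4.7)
The upper bound is Lemma~\ref{LemmaDisp}; I focus on the lower one. Write $\kappa := \kappa_{X,p}(\Gamma, H, S)$ and fix an arbitrary nonzero $\xi \in \ell_{p,0}(\Gamma/H, \tilde X_{(p)})$. I want to show
\[
\sum_{s\in S}\|\pi_{X,p}(s)\xi - \xi\|^p \geq \frac{|S|}{\nu}\,\kappa^p\,\|\xi\|^p,
\]
which, after taking infimum over $\xi$ and dividing by $2$, gives the claim. The plan is to assemble the $|Q|$ twists of $\xi$ under $Q$ into a single augmented zero-sum vector $\Xi$ whose $\pi(s)$-displacement encodes the orbit sums $\sum_{t \in S_i}\|\pi(t)\xi - \xi\|^p$, and then to apply the definition of $\kappa$ directly to $\Xi$.

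Treat first the case when $H$ is $Q$-invariant, so that $Q$ acts on $\ell_p(\Gamma/H, \tilde X_{(p)})$ by linear isometries $(q\xi)(\gamma H) := \xi(q^{-1}(\gamma)H)$, satisfying the intertwining identity $q \pi(\gamma)q^{-1} = \pi(q(\gamma))$. I would set
\[
\Xi := (q\xi)_{q \in Q} \in \ell_p(Q, \ell_p(\Gamma/H, \tilde X_{(p)})).
\]
The decisive tool is the $p$-stabilization isomorphism $\ell_p(Q, \tilde X_{(p)}) \cong \tilde X_{(p)}$ (valid since $Q$ is finite), which yields a $\Gamma$-equivariant isometry $\ell_p(Q, \ell_p(\Gamma/H, \tilde X_{(p)})) \cong \ell_p(\Gamma/H, \tilde X_{(p)})$ sending zero-sum vectors to zero-sum vectors. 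Under this identification $\Xi \in \ell_{p, 0}(\Gamma/H, \tilde X_{(p)})$ and $\|\Xi\|^p = |Q|\|\xi\|^p$. A direct computation using the intertwining and the orbit--stabilizer theorem gives, for any $s \in S_i$,
\[
\|\pi(s)\Xi - \Xi\|^p = \sum_{q \in Q}\|\pi(q^{-1}(s))\xi - \xi\|^p = \frac{|Q|}{|S_i|}\sum_{t \in S_i}\|\pi(t)\xi - \xi\|^p.
\]
Applying the definition of $\kappa$ to $\Xi$, namely $\sup_s \|\pi(s)\Xi - \Xi\|^p \geq \kappa^p \|\Xi\|^p = |Q|\kappa^p \|\xi\|^p$, yields $\max_i \frac{1}{|S_i|}\sum_{t \in S_i}\|\pi(t)\xi - \xi\|^p \geq \kappa^p\|\xi\|^p$. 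Letting $S_{i^*}$ achieve the maximum and using $|S_{i^*}| \geq |S|/\nu$, I conclude $\sum_{s \in S}\|\pi(s)\xi - \xi\|^p \geq |S_{i^*}| \kappa^p \|\xi\|^p \geq \frac{|S|}{\nu}\kappa^p \|\xi\|^p$, as desired.

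When $H$ is not $Q$-invariant I would reduce to the previous case by passing to the $Q$-core $K := \bigcap_{q \in Q} q(H)$, a $Q$-invariant finite-index subgroup of $\Gamma$. The $\Gamma$-equivariant pullback $\phi: \ell_{p, 0}(\Gamma/H, \tilde X) \hookrightarrow \ell_{p, 0}(\Gamma/K, \tilde X)$ is a scaled isometry (by $[H:K]^{1/p}$) that preserves the ratio $\sum_s \|\cdot\|^p/\|\cdot\|^p$, and rerunning the construction on $\phi\xi$ confined to the $\phi$-image (on which the infimum of $\sup_s \|\pi(s)\cdot - \cdot\|/\|\cdot\|$ still equals $\kappa$) transports the estimate back. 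The hard part is verifying, in the $Q$-invariant case, that the $p$-stabilization identification is simultaneously $\Gamma$-equivariant and maps the $\bigoplus_q \ell_{p,0}$ subspace into $\ell_{p,0}$; without it, the displacement constant of an $\ell_p$-direct sum of $|Q|$ copies of the same representation can fall short of $\kappa$ by a factor $|Q|^{1/p}$, exactly killing the gain one is trying to extract from the $Q$-symmetry.
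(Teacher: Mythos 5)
Your Case 1 (the case $q(H)=H$ for all $q\in Q$) is correct, and it is in substance the same argument as the paper's: the paper forms the twisted direct-sum representation $\pi'(\gamma)=\bigoplus_{i}\pi_{X,p}(\sigma_i(\gamma))$ on $\ell_{p,0}(\Gamma/H,\ \ell_p\textrm{-}\bigoplus_{i=1}^{|Q|}\tilde{X}_{(p)})$ and tests it on the diagonal vector $(\xi,\dots,\xi)$ built from an almost-minimizer of the summed displacement, whereas you keep the untwisted diagonal action and move the twist into the test vector $\Xi=(q\xi)_{q\in Q}$; when $H$ is $Q$-invariant the two pictures are conjugate via the isometry $(\eta_q)_q\mapsto(q\eta_q)_q$. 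Your verification that the identification $\ell_p(Q,\ell_p(\Gamma/H,\tilde{X}_{(p)}))\cong\ell_p(\Gamma/H,\tilde{X}_{(p)})$ is simultaneously $\Gamma$-equivariant and zero-sum preserving is precisely the content of the paper's remark that $\pi'$ is a subrepresentation of $\pi_{X,p}$ (``this is the reason why we take the $p$-stabilization''), and you are right that this is the step carrying all the weight: without it one loses exactly the factor $|Q|^{1/p}$ you mention. The orbit--stabilizer count, the bound $\max_i\frac{1}{|S_i|}\sum_{t\in S_i}\|\pi(t)\xi-\xi\|^p\geq\kappa^p\|\xi\|^p$, and the final use of $|S_{i^*}|\geq|S|/\nu$ together with the identity $\lambda_1=\frac12\inf_\xi\sum_s\|\pi(s)\xi-\xi\|^p/\|\xi\|^p$ from Lemma~\ref{LemmaDisp} are all fine.

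The gap is Case 2. After passing to the core $K=\bigcap_{q\in Q}q(H)$, the $q$-component of your augmented vector is $(q\,\phi\xi)(\gamma K)=\xi(q^{-1}(\gamma)H)$, which is constant on $q(H)$-cosets, not on $H$-cosets; hence $\Xi$ leaves the $\phi$-image of $\ell_{p,0}(\Gamma/H,\tilde{X}_{(p)})$ as soon as some $q(H)\neq H$, and on the space where $\Xi$ actually lives you can only invoke $\kappa_{X,p}(\Gamma,K,S)\leq\kappa_{X,p}(\Gamma,H,S)$ --- the wrong direction. Moreover this cannot be repaired, because the left-hand inequality of the statement genuinely fails for non-$Q$-invariant $H$: take $\Gamma=\mathbb{Z}^2$, $S=\{\pm e_1,\pm e_2\}$, $Q$ the cyclic group generated by the rotation $e_1\mapsto e_2\mapsto -e_1$, so that $S$ is a single orbit and $\nu=1$, and $H=m\mathbb{Z}\times\mathbb{Z}$ with $m\geq 3$. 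The Schreier graph is the $m$-cycle with two self-loops at each vertex, and for $(X,p)=(\mathbb{C},2)$ one computes $\kappa_{X,p}(\Gamma,H,S)=2\sin(\pi/m)$ while $\lambda_1=4\sin^2(\pi/m)=\kappa^2$, which is strictly smaller than the claimed bound $\frac{|S|}{2\nu}\kappa^2=2\kappa^2$. (The paper's own proof tacitly needs the same invariance at the corresponding step --- the subrepresentation claim for $\pi'$ --- and its applications, $H=\{e\}$ for Hamming cubes and congruence subgroups of $\mathrm{SL}_n(\mathbb{Z})$, do satisfy it.) So your Case 1 is the correct theorem; Case 2 should be deleted and the hypothesis $q(H)=H$ for all $q\in Q$ added, rather than argued around.
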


\begin{proof}
Let $N:=|Q|$ and suppose $Q:=\{\sigma_1,\ldots ,\sigma_N \}$. Consider a new isometric $\Gamma$-representation $\pi'$ on $\ell_{p,0}(\Gamma/H, p\textrm{-}\bigoplus_{i=1}^{N}\tilde{X}_{(p)})$ in the following manner ($p\textrm{-}\bigoplus$ means the $\ell_p$-direct sum): for $\gamma \in \Gamma$ and $\eta \in \ell_{p,0}(\Gamma/H, p\textrm{-}\bigoplus_{i=1}^{N}\tilde{X}_{(p)})$, 
\[
\pi'_{X,p}(\gamma)(\eta_1(v),\ldots ,\eta_N(v)):=(\eta_1(\sigma_1(\gamma)^{-1}v), \ldots ,\eta_N(\sigma_N(\gamma)^{-1}v)).
\]
Here $\eta(v)=(\eta_1(v),\ldots ,\eta_N(v)) \in p\textrm{-}\bigoplus_{i=1}^{N}\tilde{X}_{(p)}$. Observe that $p\textrm{-}\bigoplus_{i=1}^{N}\tilde{X}_{(p)}$ is isomorphic to $\tilde{X}_{(p)}$ itself and that with this identification $\pi_{X,p}'$ is a subrepresentation of $\pi_{X,p}$ (this is the reason why we take the $p$-stabilization of $X$). Therefore we have 
\[
\kappa_{X,p}(\Gamma,H,S) \leq  \inf_{0\ne \eta \in \ell_{p,0}(\Gamma/H, p\textrm{-}\bigoplus_{i=1}^{N}\tilde{X}_{(p)})}\sup_{s\in S}\frac{\| \pi_{X,p}'(s)\xi -\xi\|}{\|\xi\|}.
\]

Define $\kappa'_{X,p}(\Gamma,H,S):= \inf_{0 \ne \zeta\in \ell_{p,0}(\Gamma/H, \tilde{X}_{(p)})}( \sum_{s\in S} \frac{\|\pi_{X,p}(s)\zeta -\zeta\|^p}{\|\zeta\|^p})^{1/p}$ 
and for any $\epsilon>0$ get $0\ne \xi:=\xi_{\epsilon} \in \ell_{p,0}(\Gamma/H, \tilde{X}_{(p)})$ which attains $\kappa'_{X,p}$ up to an  error of $+\epsilon$. Set $\eta:=\xi\oplus \cdots \oplus \xi \in \ell_{p,0}(\Gamma/H,p\textrm{-}\bigoplus_{i=1}^{N}\tilde{X}_{(p)})$. Then by letting $\epsilon \searrow 0$, we obtain that $(\frac{|S|}{\nu})^{1/p} \kappa_{X,p}(\Gamma,H,S) \leq \kappa'_{X,p}(\Gamma,H,S)$. This together with the proof of Lemma~\ref{LemmaDisp} ends our proof.
\end{proof}

\begin{example}\label{ExampleHamming}
For $H_n=\mathrm{Cay}((\mathbb{Z}/2\mathbb{Z})^n,S_n)$, the $n$-dimensional Hamming cube ($S_n$ is the standard set of generators), we can have $\nu=1$. Therefore Theorem~\ref{TheoremPZ} implies that for any $(X,p)$, $\lambda_1(H_n;X,p)= n \cdot \kappa_{X,p}((\mathbb{Z}/2\mathbb{Z})^n,\{e\},S_n)^p$.

Similarly, for $\Gamma=\mathrm{SL}_n(\mathbb{Z})$ and $T_n$ the standard set of generators (the set of all unit elementary matrices. See Proposition 6.4.), Theorem~\ref{TheoremPZ} applies with $\nu=2$.
\end{example}

\subsection{Sphere equivalence and complex interpolation}
We give the definition of an \textit{upper modulus of continuity}.

\begin{definition}\label{DefinitionModulus}
Let $X,Y$ be Banach spaces and $\phi\colon S(X)\to S(Y)$ be a uniformly continuous map between unit spheres. 
\begin{enumerate}[$(i)$]
  \item Define $\mathcal{M}_{\phi}$ to be the class of all functions $\delta\colon [0,2]\to \mathbb{R}_{\geq 0}$ which satisfy the following conditions:
   \begin{itemize}
    \item $\delta$ is nondecreasing;
    \item $\lim_{\epsilon \to +0} \delta(\epsilon)=0$;
    \item for any $x_1,x_2\in S(X)$ with $\|x_1-x_2\|_X\leq  \epsilon$, we have $\| \phi(x_1)-\phi(x_2)\|_Y \leq \delta(\epsilon)$.
   \end{itemize}
We call an element $\delta$ in $\mathcal{M}_{\phi}$ an \textit{upper modulus of continuity} of $\phi$.
  \item Define $\overline{\phi}\colon X\to Y$ to be the extension of $\phi$ by homogeneity, namely, $\overline{\phi}(x):=\|x\|_X \phi (x/\|x\|_X)$ for $0\ne x\in X$ and $\overline{\phi}(0):=0$. We call $\overline{\phi}$ the \textit{canonical extension} of $\phi$.
\end{enumerate}
\end{definition}

Chapter~9 of \cite{BookBeLi} is an excellent reference on the sphere equivalence among Banach spaces. Here we recall some results presented in \cite{BookBeLi}. All (infinite-dimensional and separable) $\ell_p$ spaces, $L_p$ spaces, noncommutative $L_p$ spaces (recall $p\in [1,\infty)$) belong to $[\ell_2]_S$. Moreover so do all separable uniformly convex Banach spaces with unconditional basis. For $\ell_p$ spaces, the \textit{Mazur map} from $\ell_p$ to $\ell_2$ (here $\mathrm{sign}(a_i)\in S^1\subseteq \mathbb{C}$): $M_{p,2}\colon \ell_p \to \ell_2;\quad (a_i)_i \mapsto (\mathrm{sign}(a_i)|a_i|^{p/2})_i$ gives a uniform homeomorphism as follows (\cite[Theorem~9.1]{BookBeLi}) (here we only state on upper moduli of $M_{p,2}$): 
  \begin{enumerate}[$(i)$]
    \item If $p\geq 2$, then the function $\delta\colon [0,2]\to \mathbb{R}_{\geq 0}$; $\delta (\epsilon):= (p/2) \delta$ is in $\mathcal{M}_{M_{p,2}}$ ($M_{p,2}$ is Lipschitz).
    \item If $p<2$, then the function $\delta\colon [0,2]\to \mathbb{R}_{\geq 0}$; $\delta (\epsilon):= 4 \delta^{p/2}$ is in $\mathcal{M}_{M_{p,2}}$  ($M_{p,2}$ is $p/2$-H\"{o}lder).
  \end{enumerate}

Also, the complex interpolation, see a book \cite{BookBeLo}, gives examples of sphere equivalent pairs. Theorem~9.12 in \cite{BookBeLi} states that for a complex interpolation pair $(X_0,X_1)$, if either $X_0$ or $X_1$ is uniformly convex, then any $0<\theta <\theta' <1$, $X_{\theta}\sim_S X_{\theta'}$ holds.

\begin{definition}\label{DefinitionEquiv}
Let $X,Y$ be Banach spaces and $p,q\in [1,\infty)$. Let $F$ be an at most countable set. For a map $\phi \colon S(\ell_p(F,X)) \to S(\ell_q(F,Y))$, we say that $\phi$ is $\mathrm{Sym}(F)$\textit{-equivariant} if for any $\sigma \in \mathrm{Sym}(F)$, $\phi \circ \sigma_{X,p} =\sigma_{Y,q} \circ \phi$ holds true. Here a Banach space $Z$ and $r\in [1,\infty)$, the symbol $\sigma_{Z,r}$ denotes the isometry $\sigma_{Z,r}$ on $\ell_r(F,Z)$ induced by $\sigma$, namely, $(\sigma_{Z,r}\xi) (a):= \xi (\sigma^{-1}(a))$ for $\xi \in \ell_r(F,Z)$ and $a\in F$. Here by $\mathrm{Sym}(F)$ we mean the group of all permutations on $F$, \textit{including ones of infinite supports}.
\end{definition}

\begin{theorem}\label{TheoremExtra}
For any uniformly convex Banach space $X$ and $p,q \in (1,\infty)$, we have that $\tilde{X}_{(p)}\sim_S \tilde{X}_{(q)}$. Furthermore, we may have a uniform homeomorphism $\phi\colon S(\ell_p(\mathbb{N},X))\to S(\ell_q(\mathbb{N},X))$ which is $\mathrm{Sym}(\mathbb{N})$-equivariant.
\end{theorem}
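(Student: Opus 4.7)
The plan is to exhibit the uniform homeomorphism explicitly as the coordinatewise \emph{vector-valued Mazur map}: for $\xi = (\xi_i)_{i \in \mathbb{N}} \in S(\ell_p(\mathbb{N}, X))$, set
\[
\phi(\xi)_i := \|\xi_i\|_X^{p/q - 1}\, \xi_i \qquad (i \in \mathbb{N}),
\]
with the convention $\phi(\xi)_i := 0$ when $\xi_i = 0$. Since $\|\phi(\xi)_i\|_X^q = \|\xi_i\|_X^p$, summing over $i$ yields $\|\phi(\xi)\|_{\ell_q(X)} = 1$, so $\phi$ takes values in $S(\ell_q(\mathbb{N}, X))$; the same formula with $p$ and $q$ interchanged defines an inverse. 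The $\mathrm{Sym}(\mathbb{N})$-equivariance is automatic from the coordinatewise definition, since for every $\sigma \in \mathrm{Sym}(\mathbb{N})$,
\[
\phi(\sigma_{X,p}\xi)_i = \|\xi_{\sigma^{-1}(i)}\|^{p/q - 1}\xi_{\sigma^{-1}(i)} = \phi(\xi)_{\sigma^{-1}(i)} = (\sigma_{X,q}\phi(\xi))_i.
\]

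The substantive work is to show that $\phi$ and $\phi^{-1}$ are uniformly continuous. By symmetry, assume $1 < p \leq q < \infty$ and set $\alpha := p/q \in (0, 1]$. For $\phi$, the plan is to establish the pointwise H\"older-type estimate: there is a constant $C = C(X, p, q)$ such that for all $x, y \in X$ with $\|x\|, \|y\| \leq 1$,
\[
\bigl\| \|x\|^{\alpha - 1} x - \|y\|^{\alpha - 1} y \bigr\|_X^q \leq C\, \|x - y\|_X^p ;
\]
summing over coordinates then gives $\|\phi(\xi) - \phi(\eta)\|_{\ell_q(X)}^q \leq C \|\xi - \eta\|_{\ell_p(X)}^p$, so $\phi$ is H\"older of order $\alpha$. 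For $\phi^{-1}$, the opposite exponent $\beta := q/p \geq 1$ renders the scalar Mazur map Lipschitz; the coordinatewise bound $\bigl\|\|x\|^{\beta-1} x - \|y\|^{\beta - 1} y\bigr\|_X \precsim \max(\|x\|, \|y\|)^{\beta - 1}\|x - y\|_X$ combined with H\"older's inequality (conjugate exponents $q/p$ and $q/(q-p)$, applied against $\sum_i \|\eta_{k,i}\|^q \leq 1$) yields Lipschitz continuity of $\phi^{-1}$.

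To prove the pointwise H\"older estimate, I decompose $x = a u$, $y = b v$ with $a = \|x\|$, $b = \|y\|$, $u, v \in S(X)$ (arbitrary when $x = 0$ or $y = 0$), and split into two regimes. In the \emph{incomparable-radii} regime $\|x - y\| \geq \min(a, b)$, the crude bound $a^\alpha + b^\alpha \precsim \max(a, b)^\alpha \precsim \|x - y\|^\alpha$ (using $|a - b| \leq \|x - y\|$) gives the estimate directly via $q\alpha = p$. In the \emph{comparable-radii} regime $\|x - y\| < \min(a, b)$, the triangle inequality gives $\|u - v\| \leq 2\|x - y\|/\max(a, b)$; combined with the scalar Mazur estimate $|a^\alpha - b^\alpha| \leq C_\alpha |a - b|^\alpha$ on $[0, 1]$ and the conversion $a^{\alpha - 1}\|x - y\| \leq \|x - y\|^\alpha$ (valid since $\alpha - 1 < 0$ and $a \geq \|x - y\|$ in this regime), the desired estimate follows. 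The uniform convexity of $X$ enters to ensure that both regimes can be stitched together with a single modulus and that the constant $C$ depends only on the modulus of $X$ and on $p, q$.

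The main obstacle is the comparable-radii regime: the factor $a^{\alpha - 1}$ blows up as $a \to 0$, and reconciling this with the smallness of $\|x - y\|$ demands the careful case split above so that the final H\"older exponent and constant both emerge correctly. Once the pointwise estimate is secured, the passage to the $\ell_p(X) \to \ell_q(X)$ level is immediate by coordinatewise summation, preserving both the uniform continuity and the manifest $\mathrm{Sym}(\mathbb{N})$-equivariance.
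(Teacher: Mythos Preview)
Your approach is correct and genuinely different from the paper's. The paper proceeds via complex interpolation: choosing $1<p_0<\min\{p,q\}$ and $\max\{p,q\}<p_1<\infty$, both $\tilde X_{(p)}$ and $\tilde X_{(q)}$ arise as intermediate spaces for the pair $(\tilde X_{(p_0)},\tilde X_{(p_1)})$, and since these endpoints are uniformly convex (as $\ell_r$-sums, $r\in(1,\infty)$, of a uniformly convex space), Theorem~9.12 of Benyamini--Lindenstrauss delivers the sphere equivalence; the $\mathrm{Sym}(\mathbb N)$-equivariance is then read off from the minimizer construction in that proof. Your explicit vector-valued Mazur map is more elementary and hands you concrete H\"older/Lipschitz moduli (exponents $p/q$ and $1$ when $p\le q$), which the interpolation route does not furnish directly; this is useful if one later wants quantitative control in the style of Theorem~\ref{TheoremSEgap}.

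One correction: your invocation of uniform convexity is spurious. The pointwise estimate $\bigl\|\,\|x\|^{\alpha-1}x-\|y\|^{\alpha-1}y\,\bigr\|^q\le C\,\|x-y\|^p$ holds in \emph{any} Banach space $X$, with $C$ depending only on $p$ and $q$; the two-regime argument you sketch nowhere uses the modulus of convexity of $X$. So your proof in fact establishes $\tilde X_{(p)}\sim_S\tilde X_{(q)}$ for arbitrary $X$, which is stronger than the stated theorem. The paper's interpolation argument, by contrast, genuinely needs uniform convexity of $X$ to guarantee that the endpoint spaces $\tilde X_{(p_0)},\tilde X_{(p_1)}$ are uniformly convex, the hypothesis required by the cited sphere-equivalence theorem.
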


\begin{proof}
Choose $1<p_0< \min \{p,q\}$ and $\infty>p_1>\max\{p,q\}$. Then \cite[Theorem~5.1.2]{BookBeLo} applies to the case where $\Omega=\mathbb{N}$ and $A_0=A_1=X$. This tells us that both of $\tilde{X}_{(p)}$ and $\tilde{X}_{(q)}$ are, respectively, isometrically isomorphic to some intermediate points of a complex interpolation pair $(\tilde{X}_{p_0},\tilde{X}_{p_1})$. Because $\tilde{X}_{p_0}$ and $\tilde{X}_{p_1}$ are uniformly convex, the aforementioned result applies.

The last assertion follows from the proof of \cite[Theorem~9.12]{BookBeLi}. Indeed, the definition of $f_x$ for $x \in \ell_p(\mathbb{N},X)$, as the minimizer of a certain norm, in Proposition~I.3 in \cite{BookBeLi} is $\mathrm{Sym}(\mathbb{N})$-equivariant in the current setting.
\end{proof}

\subsection{Sphere equivalence and $p$-stabilization}
The following proposition plays a key r\^{o}le in the proof of Theorem~\ref{mthma}. Lemma~9.9 in \cite{BookBeLi} shows the assertions in this proposition with  worse-order estimations of upper moduli.

\begin{proposition}\label{PropositionSt}
Assune that  $\phi\colon S(X)\to S(Y)$ is a uniformly continuous map for two Banach spaces $X$ and $Y$. Then for any $p\in [1,\infty)$, the map
\[
\Phi=\Phi_p \colon S(\tilde{X}_{(p)})\to S(\tilde{Y}_{(p)});\quad 
(x_i)_i \mapsto (\overline{\phi}(x_i))_i
\]
is again a uniformly continuous map that is $\mathrm{Sym}(\mathbb{N})$-equivariant. Here $\overline{\phi}$ is the canonical extension of $\phi$ and we see $\tilde{X}_{(p)}$ and $\tilde{Y}_{(p)}$, respectively, as $\ell_p(\mathbb{N},X)$ and $\ell_p(\mathbb{N},Y)$. Moreover, if $\delta(t):=Ct^{\alpha} \in \mathcal{M}_{\phi}$ for some $C>0$ and some $\alpha\in (0,1]$, then $\delta'(t):=(2C+2)t^{\alpha}$ belongs to $\mathcal{M}_{\Phi_p}$.
\end{proposition}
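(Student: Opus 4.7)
The plan proceeds in three main stages. First, $\Phi_p$ is well-defined into $S(\tilde{Y}_{(p)})$ and is $\mathrm{Sym}(\mathbb{N})$-equivariant: since $\phi$ sends $S(X)$ into $S(Y)$, the canonical extension satisfies $\|\overline{\phi}(x)\|_Y = \|x\|_X$ for every $x\in X$, hence $\|\Phi_p(x)\|_{\ell_p(\mathbb{N},Y)} = \|x\|_{\ell_p(\mathbb{N},X)}$, and the equivariance is immediate by comparing the coordinatewise action of $\Phi_p$ with the reindexing actions of $\sigma_{X,p}$ and $\sigma_{Y,p}$.

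The core of the argument is a sharp pointwise estimate for $\overline{\phi}$. For nonzero $x,y \in X$, write $u := x/\|x\|$ and $v := y/\|y\|$; the decomposition
\[
\overline{\phi}(x) - \overline{\phi}(y) = \|x\|(\phi(u) - \phi(v)) + (\|x\| - \|y\|)\phi(v)
\]
together with its symmetric counterpart (with $\|y\|$ pulled out in front) gives
\[
\|\overline{\phi}(x) - \overline{\phi}(y)\|_Y \leq \min(\|x\|, \|y\|)\,\delta(\|u-v\|) + \|x-y\|_X.
\]
Assuming $\|x\|\geq\|y\|$, the identity $\|x\|(u-v) = (x-y) - y(\|x\|-\|y\|)/\|y\|$ yields the standard bound $\|u-v\| \leq 2\|x-y\|/\max(\|x\|,\|y\|)$; the case when one of $x, y$ vanishes reduces directly to $\|\overline{\phi}(x) - \overline{\phi}(y)\| = \|x-y\|$. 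Substituting $\delta(t)=Ct^\alpha$ and using $\min\leq\max$ together with $\alpha\leq 1$ yields the coordinatewise inequality
\[
\|\overline{\phi}(x_i) - \overline{\phi}(y_i)\|_Y \leq 2^\alpha C\,\max(\|x_i\|,\|y_i\|)^{1-\alpha}\,\|x_i-y_i\|^\alpha + \|x_i-y_i\|.
\]

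The third stage is to sum these in $\ell_p$. Set $t := \|x-y\|_{\ell_p(\mathbb{N},X)}\leq 2$. Minkowski's inequality in $\ell_p(\mathbb{N},Y)$ splits the total into two pieces. For the first, H\"older's inequality with exponents $1/(1-\alpha)$ and $1/\alpha$ (the case $\alpha=1$ being immediate) gives
\[
\sum_i \max(\|x_i\|,\|y_i\|)^{(1-\alpha)p}\,\|x_i-y_i\|^{\alpha p} \leq \Bigl(\sum_i \max(\|x_i\|,\|y_i\|)^p\Bigr)^{1-\alpha}t^{\alpha p} \leq 2^{1-\alpha}\,t^{\alpha p},
\]
using $\sum_i \max(\|x_i\|,\|y_i\|)^p \leq \|x\|^p + \|y\|^p = 2$. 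Taking $p$-th roots, this piece contributes at most $2^\alpha C\cdot 2^{(1-\alpha)/p}\,t^\alpha \leq 2Ct^\alpha$, since $\alpha + (1-\alpha)/p\leq 1$ for $p\geq 1$. The second piece equals $t\leq 2^{1-\alpha}t^\alpha \leq 2t^\alpha$ because $t\leq 2$. Adding, $\|\Phi_p(x)-\Phi_p(y)\|_{\ell_p(\mathbb{N},Y)}\leq (2C+2)t^\alpha$, proving $\delta' \in \mathcal{M}_{\Phi_p}$. The general uniform continuity (for an arbitrary upper modulus $\delta$, not only H\"older ones) follows from the same pointwise estimate by splitting the index set according to whether $\max(\|x_i\|,\|y_i\|)$ exceeds a threshold depending on $t$; alternatively one may invoke \cite[Lemma~9.9]{BookBeLi}.

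The principal obstacle is extracting the specific constant $2C+2$: while the coordinatewise inequality is routine, the Minkowski--H\"older step must be tuned using both the normalizations $\sum_i\|x_i\|^p = \sum_i\|y_i\|^p = 1$ and the coarse bound $t\leq 2$ in order to absorb all combinatorial factors into a constant that is independent of $p$ and matches $C$ sharply at $\alpha=1$.
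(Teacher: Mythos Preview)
Your argument is correct and takes a genuinely different route from the paper's. The paper proceeds by first replacing the modulus $\delta$ with a concave one (and, for $p>1$, one satisfying $\delta(t)^p\le D\,\delta(t^p)$), then reducing to the special case where $\|x_i\|=\|y_i\|$ for all $i$ via an intermediate vector $z_i:=(\|x_i\|/\|y_i\|)y_i$; concavity and a Jensen-type inequality then give $\delta'(t)=(D\,\delta((2t)^p))^{1/p}+t\in\mathcal{M}_{\Phi_p}$, which specializes to $(2C+2)t^\alpha$ in the H\"older case. You instead establish a sharp pointwise bound for $\overline{\phi}$ directly and then sum by Minkowski together with H\"older with exponents $1/(1-\alpha)$ and $1/\alpha$, using $\sum_i\max(\|x_i\|,\|y_i\|)^p\le 2$ on the unit sphere. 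Your approach is cleaner and more transparent for the H\"older modulus---the constant $(2C+2)$ drops out without any preliminary regularization of $\delta$---whereas the paper's concavity argument gives, in a single stroke, an explicit upper modulus for $\Phi_p$ from any $\delta\in\mathcal{M}_{\phi}$, not only power functions. Your handling of the general uniform continuity (threshold splitting, or deferring to \cite[Lemma~9.9]{BookBeLi}) is acceptable, since the proposition's quantitative content lies in the H\"older estimate.
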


\begin{proof}
By construction, this $\Phi_p$ is coordinatewise and hence in particular $\mathrm{Sym}(\mathbb{N})$-equivariant. Our proof of the uniform continuity of $\Phi_p$ consists of two cases.

\noindent
\underline{\textit{Case} $1:$ \textit{for $p=1$.}}\quad Take $\delta_0\in \mathcal{M}_{\phi}$. We can replace $\delta_0$ with $\delta \in \mathcal{M}_{\phi}$ with $\delta\geq \delta_0$ such that $\delta$ is continuous and  concave in the broad sense (to do this, consider the convex hull of the graph of $\delta$). Let $(x_i)_i$ and $(y_i)_i$ be in $S(\tilde{X}_{(1)})$. 

First we consider the case where for all $i\in \mathbb{N}$ $\|x_i\|_X=\|y_i\|_X$. Set $r_i:=\|x_i\|_X$ and $\epsilon_i r_i= \|x_i-y_i\|_X $. Then by the concave inequality, we have the following:
\begin{align*}
\| \Phi((x_i)_i)-\Phi((y_i)_i)\|_{\tilde{Y}_{(1)}}\leq \sum_i r_i\delta (\epsilon_i)  \leq \delta (\sum_{i}r_i\epsilon_i)= \delta (\|(x_i)_i-(y_i)_i\|_{\tilde{X}_{(1)}}).
\end{align*}

Secondly we deal with the general case. For $(x_i)_i,(y_i)_i \in \tilde{X}_{(1)}$, define $z_i:= \frac{\|x_i\|_X}{\|y_i\|_X}y_i$ ($z_i:=x_i$ if $y_i=0$). Suppose $\| (x_i)_i-(y_i)_i\|_{\tilde{X}_{(1)}} \leq \epsilon$. Because for any $i$, $\|x_i-y_i\|_{X} \geq \|z_i-y_i\|_X$, we have that $\| (z_i)_i-(y_i)_i\|_{\tilde{X}_{(1)}} \leq \epsilon$. Hence we obtain that $\| (x_i)_i-(z_i)_i\|_{\tilde{X}_{(1)}} \leq 2\epsilon$. Therefore in the first argument, we have that $\| \Phi((x_i)_i)-\Phi((z_i)_i)\|_{\tilde{Y}_{(1)}} \leq \delta(2\epsilon)$. Since $\| \Phi((y_i)_i)-\Phi((z_i)_i)\|_{\tilde{Y}_{(1)}} \leq \epsilon$ by homogeneity, we conclude that $\delta'(t):=\delta(2t)+t$ belongs to  $\mathcal{M}_{\Phi_1}$.

\ 

\noindent
\underline{\textit{Case} $2:$ \textit{for general} $p>1$.}\quad Take $\delta_0\in \mathcal{M}_{\phi}$. In this case we need to replace $\delta_0$ with concave $\delta \in \mathcal{M}_{\phi}$ from above which in addition satisfies the following property: there exists $D>0$ such that for any $t\in [0,2^{1/p}]$, we have that $\delta(t)^p \leq D \delta (t^p)$ (this replacement is always possible by some  differential calculation). 

Once this replacement has been done, the remaining argument goes along a similar line to one in Case $1$. Thus we can show that $\delta'(t):= \left(D \delta ((2t)^p) \right)^{1/p} +t$ belongs to $\mathcal{M}_{\Phi_p}$. 

Finally, suppose that $\delta(t):=Ct^{\alpha}\in \mathcal{M}_{\phi}$. Then we do not need to replace $\delta$ because $\delta(t)^p=C^{p-1}\delta(t^p)$. Then we have that $\delta'(t):=(2 C+2)t^\alpha \geq 2^\alpha Ct^\alpha +t$  is in $\mathcal{M}_{\Phi_p}$. 
\end{proof}

\section{Proof of Theorem~\ref{mthma}}\label{SectionProofA}
The goal of this section is to prove the following theorem.

\begin{theorem}[Detailed form of Theorem~$\mathrm{\ref{mthma}}$]
\label{TheoremSEgap}
Let $X \sim_S Y$ and $\phi\colon S(X) \to S(Y)$ be a uniform homeomorphism.  
\begin{enumerate}[$(i)$]
\item For a finite graph $G=(V,E)$ and for $p\in [1,\infty)$, we have the following inequality: 
\[
\lambda_1(G;X,p)\geq \frac{1}{2}\left\{\delta_1^{-1}\left(\frac{1}{2}\left(\frac{2}{\Delta(G)}\right)^{1/p}\delta_2^{-1}\left(\frac{1}{2}\right) \lambda_1(G;Y,p)^{1/p}\right) \right\}^p.
\]
Here $\delta_1 \in \mathcal{M}_{\Phi_p}$ and $\delta_2 \in \mathcal{M}_{\Phi_p^{-1}}$ as in Proposition~$\ref{PropositionSt}$, and for $j=1,2$ if $\delta_j$ is not injective or surjective, then set $\delta_j^{-1}(s):= \inf \{t\in [0,2]: \delta_j(t)\geq s\}$. 

In particular, if $\delta_0(t):=Ct^{\alpha} \in \mathcal{M}_{\phi}$ for some $C>0$ and some $\alpha \in (0,1]$, then there exists a constant $W=W_p>0$ depending only on $\delta_2^{-1}(1/2)$ for $\delta_2 \in \mathcal{M}_{\Phi_p^{-1}}$ such that $\lambda_1(G;X,p) \geq  \frac{W^{p/\alpha}}{(C+1)^{p/\alpha}\Delta(G)^{1/\alpha}} \lambda_1(G;Y,p)^{1/\alpha}$. In short, in this case,
\[
\lambda_1(G;X,p) \succsim_{W,C,\Delta(G),p}  \lambda_1(G;Y,p)^{1/\alpha}.
\]
\item Assume that $G$ is of the form $G=\mathrm{Sch}(\Gamma,H,S)$ and that $\nu$ is as in Theorem~$\ref{TheoremPZ}$. If $\delta_0(t):=Ct^{\alpha} \in \mathcal{M}_{\phi}$ for some $C>0$ and some $\alpha \in (0,1]$, then we have that 
\[
\lambda_1(G;X,p) \succsim_{W,C,p}  \frac{1}{\nu |S|^{1/\alpha-1}} \lambda_1(G;Y,p)^{1/\alpha}.
\]
Here $W$ only depends on $\delta_2^{-1}(1/2)$ for $\delta_2 \in \mathcal{M}_{\Phi_p^{-1}}$.
\end{enumerate}
\end{theorem}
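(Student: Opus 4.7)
The strategy is to transfer any test map for $\lambda_1(G;X,p)$ into one for $\lambda_1(G;Y,p)$ through the nonlinear map $\Phi_p$ provided by Proposition~\ref{PropositionSt}, controlling the Rayleigh quotient via the moduli $\delta_1\in\mathcal{M}_{\Phi_p}$ and $\delta_2\in\mathcal{M}_{\Phi_p^{-1}}$. By Lemma~\ref{LemmaSpGap}(2), $\lambda_1(G;X,p)=\lambda_1(G;\tilde X_{(p)},p)$ and $\lambda_1(G;Y,p)=\lambda_1(G;\tilde Y_{(p)},p)$, so we may work with test maps into the $p$-stabilizations. Applying Proposition~\ref{PropositionSt} to $\phi$ and to $\phi^{-1}$ yields $\mathrm{Sym}(\mathbb{N})$-equivariant uniform homeomorphisms $\Phi_p\colon S(\tilde X_{(p)})\to S(\tilde Y_{(p)})$ and $\Phi_p^{-1}$, with moduli $\delta_1$ and $\delta_2$ respectively. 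Let $\overline{\Phi_p}$ denote the canonical homogeneous extension. Fix a non-constant $f\colon V\to\tilde X_{(p)}$ with $m(f)=0$, normalize $\sum_v\|f(v)\|^p=|V|$, and set $g:=\overline{\Phi_p}\circ f\colon V\to\tilde Y_{(p)}$, which satisfies $\|g(v)\|=\|f(v)\|$ by homogeneity.

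\textbf{Numerator.} For each edge $(v,w)$, decompose
\[
g(v)-g(w)=\|f(v)\|\bigl[\Phi_p(f(v)/\|f(v)\|)-\Phi_p(f(w)/\|f(w)\|)\bigr]+(\|f(v)\|-\|f(w)\|)\Phi_p(f(w)/\|f(w)\|),
\]
and combine $\delta_1$ with the elementary inequality $\|x/\|x\|-y/\|y\|\|\le 2\|x-y\|/\max(\|x\|,\|y\|)$ to obtain a pointwise edge bound. Summing $p$-th powers and invoking a degree-averaging argument produces the factor $(2/\Delta(G))^{1/p}$ in the final inequality.

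\textbf{Denominator.} Use the triangle-inequality identity $\sum_v\|g(v)-m(g)\|^p\ge \tfrac{1}{2^p|V|}\sum_{v,w\in V}\|g(v)-g(w)\|^p$ to reduce to bounding pairwise differences. The inverse modulus $\delta_2$ of $\Phi_p^{-1}$ applied to a suitable witness pair, obtained via a pigeonhole on the values $\{\|f(v)\|\}_v$ together with the normalization $\sum_v\|f(v)\|^p=|V|$ and $m(f)=0$, produces a lower bound involving the constant $\delta_2^{-1}(1/2)$. Combining with $\lambda_1(G;Y,p)\le\mathrm{Ray}(g)$ and rearranging gives the bound in part~(i); the H\"older special case follows by substituting $\delta'(t)=(2C+2)t^\alpha\in\mathcal{M}_{\Phi_p}$ from Proposition~\ref{PropositionSt}.

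\textbf{Part (ii) and main obstacle.} For $G=\mathrm{Sch}(\Gamma,H,S)$, the only change is to replace the degree bound $\Delta(G)$ by $\nu|S|$ via the Generalized Pak--\.Zuk Theorem~\ref{TheoremPZ}; this produces the sharper $(\nu|S|^{1/\alpha-1})^{-1}$ factor. The principal technical obstacle is that $\overline{\Phi_p}$ is only homogeneous, not uniformly continuous globally, so the edge estimate inherits a radial correction $|\|f(v)\|-\|f(w)\||$ that must be absorbed into the $\delta_1$-term using concavity or the H\"older form of the modulus; simultaneously, the denominator witness pair must live at a common scale so that the sphere modulus $\delta_2$ applies cleanly. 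Balancing these two sources of control against the graph's edge count is what forces the nested $\delta_1^{-1}(\cdots\delta_2^{-1}(\cdots))$ structure in the final inequality.
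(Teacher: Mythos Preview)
Your approach differs substantially from the paper's, and as written the denominator step does not go through.

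The paper's proof is \emph{representation-theoretic}. For a Schreier graph $G=\mathrm{Sch}(\Gamma,H,S)$ it regards a normalized test map $f\colon V\to\tilde X_{(p)}$ with $m(f)=0$ as a single unit vector $\xi\in S(\ell_{p,0}(\Gamma/H,\tilde X_{(p)}))$ and applies $\Phi_p$ \emph{once} to this vector, using $\ell_p(\Gamma/H,\tilde X_{(p)})\simeq\tilde X_{(p)}$. The $\mathrm{Sym}(\mathbb N)$-equivariance of $\Phi_p$ then gives $\Phi_p\circ\pi_{X,p}(s)=\pi_{Y,p}(s)\circ\Phi_p$, so the numerator is controlled by a \emph{single} application of $\delta_1$ to the aggregate displacement $\|\pi_{X,p}(s)\xi-\xi\|$, with no radial correction whatsoever. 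For the denominator, equivariance implies that $\Phi_p$ sends $S(\text{constant functions})$ onto $S(\text{constant functions})$; since any zero-mean unit $\xi$ satisfies $\mathrm{dist}(\xi,S(\text{constants}))\ge 1/2$, one gets $\mathrm{dist}(\eta,S(\text{constants}))\ge\delta_2^{-1}(1/2)$ and then $\|\eta_0\|\ge\tfrac12\delta_2^{-1}(1/2)$. This is Proposition~\ref{PropositionTau}; combining with Lemma~\ref{LemmaDisp} (or Theorem~\ref{TheoremPZ} for part~(ii)) finishes the Schreier case. For an \emph{arbitrary} finite graph the paper then invokes Gross's theorem that every even-regular connected graph is a Schreier coset graph, after doubling all edges and padding with self-loops (the ``Gross trick''); this is how item~(i) is proved in general. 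You do not mention this reduction at all.

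Your vertex-by-vertex application of $\overline{\Phi_p}$ produces the same transformed map, but by discarding the global picture you lose the equivariance leverage, and this is fatal for your denominator bound. Reducing to $\frac{1}{|V|}\sum_{v,w}\|g(v)-g(w)\|^p$ and then invoking a ``pigeonhole on $\{\|f(v)\|\}$'' to locate one witness pair cannot work: a single pair contributes $O(1)$ to a double sum that must be of order $|V|$, and nothing in the pigeonhole prevents all $g(v)$ from clustering near a common nonzero point of $\tilde Y_{(p)}$. What actually rules this out is precisely that $f$ is far from constants and $\Phi_p$ \emph{preserves constants}---the equivariance you are not exploiting. Your numerator bound is also problematic for a general modulus: the factor $\|f(v)\|$ in the decomposition does not simply ``absorb'' into $\delta_1$ by concavity alone, whereas the paper's aggregate bound never incurs it. (In the H\"older case a H\"older-inequality argument on $\sum_e\|f(v)\|^{p(1-\alpha)}\|f(v)-f(w)\|^{p\alpha}$ can be pushed through, but that only recovers the ``in particular'' clause, not the main inequality.) The missing ingredients are: apply $\Phi_p$ to the whole function at once, use $\mathrm{Sym}$-equivariance for both numerator and denominator, and reduce general graphs to Schreier graphs via Gross.
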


\subsection{Key proposition for Schreier coset graphs}
This part is based on a work of Bader--Furman--Gelander--Monod \cite{BFGM}. See Section~4.a in \cite{BFGM} for the original idea of them. We will show the following proposition concerning the $p$-displacement constants and $p$-$(\tau)$-type constants.

\begin{proposition}\label{PropositionTau}
Let $X \sim_S Y$ and $\phi\colon S(X) \to S(Y)$ be a uniform homeomorphism. Let $\Gamma$ be a finitely generated group, $S\not\ni e$ be a symmetric finite subset, and $H$ be a subgroup of $\Gamma$ of finite index. Then for any $p\in [1,\infty)$, we have the following inequality:
\[
\kappa_{X,p}(\Gamma, H,S) \geq \delta_1^{-1}\left( \frac{1}{2}\delta_2^{-1}\left(\frac{1}{2}\right) \kappa_{Y,p}(\Gamma, H,S)\right).
\]
Here $\delta_1\in \mathcal{M}_{\Phi_p}$ and $\delta_2\in \mathcal{M}_{\Phi_p^{-1}}$.
\end{proposition}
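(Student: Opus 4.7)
My approach follows Bader--Furman--Gelander--Monod: take an arbitrary unit vector $\xi \in S(\ell_{p,0}(\Gamma/H, \tilde{X}_{(p)}))$, transport it through $\Phi_p$ to obtain $\eta := \Phi_p(\xi) \in S(\ell_p(\Gamma/H, \tilde{Y}_{(p)}))$, correct $\eta$ by subtracting its mean to obtain a mean-zero vector on the $Y$-side, and pull this back through $\Phi_p^{-1}$ to quantify how much of $\eta$ survives in the zero-sum subspace. Under the canonical identification $\ell_p(\Gamma/H, \tilde{X}_{(p)}) \cong \tilde{X}_{(p)}$ induced by any bijection $\mathbb{N} \leftrightarrow \Gamma/H \times \mathbb{N}$, each $\pi_{X,p}(s)$ acts as a coordinate permutation in $\mathrm{Sym}(\mathbb{N})$, and consequently the $\mathrm{Sym}(\mathbb{N})$-equivariance of $\Phi_p$ granted by Proposition~\ref{PropositionSt} yields $\Phi_p \circ \pi_{X,p}(s) = \pi_{Y,p}(s) \circ \Phi_p$ for every $s$, and more generally renders $\Phi_p$ (together with $\Phi_p^{-1}$) equivariant for the entire $\mathrm{Sym}(\Gamma/H)$-action on the first coordinate.

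Set $\eta' := \eta - m(\eta)\mathbf{1}_{\Gamma/H} \in \ell_{p,0}(\Gamma/H, \tilde{Y}_{(p)})$. Because the constant $m(\eta)\mathbf{1}_{\Gamma/H}$ is $\pi_{Y,p}$-invariant, the displacement of $\eta'$ equals that of $\eta$, and combining with equivariance and the modulus $\delta_1$ gives
\[
\|\pi_{Y,p}(s)\eta' - \eta'\| = \|\Phi_p(\pi_{X,p}(s)\xi) - \Phi_p(\xi)\| \leq \delta_1(\|\pi_{X,p}(s)\xi - \xi\|).
\]
Provided $\eta' \neq 0$, applying the definition of $\kappa_{Y,p}$ to $\eta'$ then yields
\[
\kappa_{Y,p}(\Gamma, H, S) \cdot \|\eta'\| \leq \delta_1\bigl(\sup_{s \in S} \|\pi_{X,p}(s)\xi - \xi\|\bigr).
\]

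The main step, and the step I expect to be the obstacle, is a universal lower bound $\|\eta'\| \geq \frac{1}{2}\delta_2^{-1}(1/2)$, independent of $\xi$. I would argue by contradiction: suppose $\|\eta'\| < \frac{1}{2}\delta_2^{-1}(1/2)$. In the nontrivial case $m(\eta) \neq 0$, form the unit vector $\tilde{z} := m(\eta)\mathbf{1}_{\Gamma/H}/\|m(\eta)\mathbf{1}_{\Gamma/H}\|$; the reverse triangle inequality gives $|1 - \|m(\eta)\mathbf{1}_{\Gamma/H}\|| \leq \|\eta'\|$, whence $\|\eta - \tilde{z}\| \leq 2\|\eta'\| < \delta_2^{-1}(1/2)$. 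Pulling back via $\Phi_p^{-1}$ with its modulus $\delta_2$ yields $\|\xi - \Phi_p^{-1}(\tilde{z})\| \leq \delta_2(2\|\eta'\|) < 1/2$. But $\tilde{z}$ is $\mathrm{Sym}(\Gamma/H)$-invariant, so equivariance forces $\Phi_p^{-1}(\tilde{z}) = \tilde{c}\mathbf{1}_{\Gamma/H}$ for some $\tilde{c} \in \tilde{X}_{(p)}$, while the mean-zero projection $P\colon \zeta \mapsto \zeta - m(\zeta)\mathbf{1}_{\Gamma/H}$ (of norm $\leq 2$ on $\ell_p(\Gamma/H, \tilde{X}_{(p)})$ by Jensen applied to $m$) satisfies $P(\xi - \tilde{c}\mathbf{1}_{\Gamma/H}) = \xi$, so $\|\xi - \tilde{c}\mathbf{1}_{\Gamma/H}\| \geq \|\xi\|/2 = 1/2$, a contradiction. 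The degenerate case $m(\eta) = 0$ is immediate: $\eta' = \eta$ and $\|\eta'\| = 1 \geq \frac{1}{2}\delta_2^{-1}(1/2)$ since $\delta_2^{-1}(1/2) \leq 2$. Combining this lower bound with the previous display and taking the infimum over $\xi$ produces the desired inequality.
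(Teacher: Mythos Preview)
Your proof is correct and follows essentially the same approach as the paper: both arguments use the $\mathrm{Sym}(\Gamma/H)$-equivariance of $\Phi_p$ (via Proposition~\ref{PropositionSt}) to intertwine the quasi-regular representations, reduce to bounding $\|\eta_0\|=\|\eta'\|$ from below by $\tfrac{1}{2}\delta_2^{-1}(1/2)$, and obtain that bound by combining the estimate $\mathrm{dist}(\xi,\text{constants})\geq 1/2$ (your projection argument, the paper's ``$p$-mean $\geq$ mean'' argument) with the triangle-inequality comparison of $\eta$ to the normalized constant $\tilde z=\eta_1/\|\eta_1\|$. The only difference is organizational: you run the lower bound on $\|\eta'\|$ as a contradiction, while the paper proves it directly, but the ingredients and constants are identical.
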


\begin{remark}\label{Remarktau}
Proposition~\ref{PropositionTau} in particular implies that $\kappa_{X,p}^{(\tau)}(\Gamma,S)>0$ if and only if $\kappa_{Y,p}^{(\tau)}(\Gamma, S)>0$ for a finitely generated group $\Gamma=\langle S\rangle$.
\end{remark}

\begin{proof}
By Proposition~\ref{PropositionSt}, $\Phi_p\colon \tilde{X}_{(p)}\to \tilde{Y}_{(p)}$ is a uniform homeomorphism that is $\mathrm{Sym}(\mathbb{N})$-equivariant. By coordinate transformation, we may regard $\Phi_p$ as 
\[
\Phi_p\colon S(\ell_p(\Gamma/H, \tilde{X}_{(p)})) \to S(\ell_p(\Gamma/H, \tilde{Y}_{(p)}))
\]
(note that $\ell_p(\Gamma/H, \tilde{X}_{(p)}) \simeq \tilde{X}_{(p)}$), which is $\mathrm{Sym}(\Gamma/H)$-equivariant. We thus have that $\Phi_p\circ \pi_{X,p}=\pi_{Y,p}\circ \Phi_p$. Note that we consider $\pi_{X,p}$ and $\pi_{Y,p}$ as $\Gamma$-representations, respectively, on $\ell_p(\Gamma/H, \tilde{X}_{p})$ and $\ell_p(\Gamma/H, \tilde{Y}_{p})$, \textit{not} on $\ell_{p,0}$. 

Choose any $\xi \in S(\ell_{p,0}(\Gamma/H, \tilde{X}_{(p)}))\subseteq S(\ell_{p}(\Gamma/H, \tilde{X}_{(p)}))$ and set $\eta:= \Phi_p (\xi) \in S(\ell_{p}(\Gamma/H, \tilde{Y}_{(p)}))$. We warn that $\eta $ does \textit{not} belong to $S(\ell_{p,0}(\Gamma/H, \tilde{Y}_{(p)}))$ in general. We however overcome this difficulty in the following argument. Recall that $\ell_{p}(\Gamma/H, \tilde{X}_{(p)})$ is decomposed as the direct sum of $\ell_{p}(\Gamma/H, \tilde{X}_{(p)})^{\pi_{X,p}(\Gamma)}$ and $\ell_{p,0}(\Gamma/H, \tilde{X}_{(p)})$. Note that the former subspace is sent to $\ell_{p}(\Gamma/H, \tilde{Y}_{(p)})^{\pi_{Y,p}(\Gamma)}$ by $\Phi_p$ (again because $\Phi_p$ is $\mathrm{Sym}(\Gamma/H)$-equivariant). We claim that $\mathrm{dist}(\xi, \ell_{p}(\Gamma/H, \tilde{X}_{(p)})^{\pi_{X,p}(\Gamma)})\geq \frac{1}{2}$. Indeed, since the $p$-mean of the norm is at least the norm of the mean, we first have that $\|\xi -\zeta\| \geq \|\zeta\|$ for any $\zeta \in \ell_{p}(\Gamma/H, \tilde{X}_{(p)})^{\pi_{X,p}(\Gamma)}$. Then we conclude that 
\[
2\cdot\inf_{\zeta \in \ell_{p}(\Gamma/H, \tilde{X}_{(p)})^{\pi_{X,p}(\Gamma)}} \|\xi-\zeta\|\geq \inf_{\zeta \in \ell_{p}(\Gamma/H, \tilde{X}_{(p)})^{\pi_{X,p}(\Gamma)}}(\|\xi-\zeta\|+\|\zeta\|) \geq \|\xi\|=1.
\]
In particular, from this claim we have that $\mathrm{dist}(\xi, S(\ell_{p}(\Gamma/H, \tilde{X}_{(p)})^{\pi_{X,p}(\Gamma)}))\geq \frac{1}{2}$. Therefore, by the uniform continuity of $\Phi_p^{-1}$, we have that $\mathrm{dist}(\eta, S(\ell_{p}(\Gamma/H, \tilde{Y}_{(p)})^{\pi_{Y,p}(\Gamma)})) \geq \delta_2^{-1}\left(\frac{1}{2}\right)$. 

Decompose $\eta$ as $\eta=\eta_1+\eta_0$ where $\eta_1\in \ell_{p}(\Gamma/H, \tilde{Y}_{(p)})^{\pi_{Y,p}(\Gamma)}$ and $\eta_0 \in \ell_{p,0}(\Gamma/H, \tilde{Y}_{(p)})$. We claim that 
\[
\|\eta_0\| \geq \frac{1}{2}\delta_2^{-1}\left(\frac{1}{2}\right).
\]
Indeed, let $\eta_1':= \eta_1'/\|\eta_1'\|$ (if $\eta_1=0$, then set $\eta_1'$ as any vector in $S(\ell_{p}(\Gamma/H, \tilde{Y}_{(p)})^{\pi_{Y,p}(\Gamma)})$). Then by the inequality in the paragraph above, we have that $\|\eta -\eta_1'\| \geq \delta_2^{-1}\left(\frac{1}{2}\right)$. Because $\|\eta_1 \| \geq 1- \|\eta_0\|$, we also have that $\|\eta_1-\eta_1' \| \leq \|\eta_0\|$ and that $\|\eta -\eta_1' \| \leq \|\eta -\eta_1 \| +\|\eta_1-\eta_1'\| \leq 2\|\eta_0\|$. By combining these inequalities, we prove the claim.

By the definition of $\kappa_{Y,p}(\Gamma,H,S)$, we have that 
\[
\sup_{s\in S} \| \pi_{Y,p}(s)\eta -\eta\|=\sup_{s\in S} \| \pi_{Y,p}(s)\eta_0 -\eta_0\| \geq \|\eta_0\| \kappa_{Y,p}(\Gamma,H,S)  \geq \frac{1}{2}\delta_2^{-1}\left(\frac{1}{2}\right) \kappa_{Y,p}(\Gamma,H,S) .
\]
Finally, because $\Phi_p\circ \pi_{X,p}=\pi_{Y,p}\circ \Phi_p$, we conclude by the uniform continuity of $\Phi_p$ that 
\[
\sup_{s\in S} \| \pi_{X,p}(s)\xi -\xi\|\geq \delta_1^{-1}\left(\frac{1}{2}\delta_2^{-1}\left(\frac{1}{2}\right)\kappa_{Y,p}(\Gamma,H,S) \right).
\]
By taking the infimum over $\xi \in S(\ell_{p,0}(\Gamma/H,\tilde{X}_{(p)}))$, we obtain the assertion.
\end{proof}

\subsection{Proof of Theorem~\ref{TheoremSEgap}: the Gross trick}\label{subsection:Gross}

\begin{proof}[Proof of Theorem~$\ref{TheoremSEgap}$]

We shall divide the proof into two cases.

\noindent
\underline{\textit{Case $1:$ when $G$ is a Schreier coset graph.}}\quad In this case, Proposition~\ref{PropositionTau} and Lemma~\ref{LemmaDisp} ends the proof of item $(i)$ (with a slightly better estimate for constants). To prove item $(ii)$, also employ Theorem~\ref{TheoremPZ}. 

\ 

\noindent
\underline{\textit{Case $2:$ in general case.}}\quad We shall appeal to a Theorem by Gross \cite{Gross} that any finite connected and \textit{regular} graph (possibly with multiple edges and self-loops) with \textit{even} degree can be realized as  a Schreier coset graph. The following argument is the \textit{Gross trick}.

Let $G=(V,E)$ be a finite connected graph. Then we take the \textit{even regularization} of $G$ in the following sense: we let $V$ unchanged. We first double each edge in $E$. Note that then for any $v,w \in V$, $\mathrm{deg}(v)-\mathrm{deg}(w) \in 2\mathbb{Z}$ and that the maximum degree is $2\Delta(G)$. Finally, we let a vertex $v$ whose degree is $2\Delta(G)$ unchanged, and for all the other vertices add, respectively, appropriate numbers of self-loops to have the resulting degree $=2\Delta(G)$ for each vertex. We write the resulting graph as $G'=(V,E')$. Then by the Gross theorem, $G'$ can be realized as a Schreier coset  graph and thus Case $1$ applies to $G'$. Finally observe that $\lambda_1(G';Z,p)=2\lambda_1(G;Z,p)$ for any $Z$ because self-loops do not affect the spectral gap.
\end{proof}

\section{Proofs of Theorem~\ref{mthmb} and Corollary~\ref{mcorc}}\label{Sectionproof}

\begin{proof}[Proof of Theorem~$\ref{mthmb}$]

Let $X\sim_S Y $, where $Y$ is uniformly convex and let $p,q\in (1,\infty)$. By Theorem~\ref{TheoremExtra}, there exists an $\mathrm{Sym}(\mathbb{N})$-equivariant  uniform homeomorphism $\Phi:=\Phi_{p,q}\colon S(\tilde{Y}_{(p)})\to S(\tilde{Y}_{(q)})$. First we start from the case where $G$ is of the form $\mathrm{Sch}(\Gamma,H,S)$. Then we regard $\Phi$ as an $\mathrm{Sym}(\Gamma/H)$-equivariant uniform homeomorphism
\[
\Phi\colon S(\ell_p(\Gamma/H, \tilde{Y}_{(p)})) \to S(\ell_q(\Gamma/H, \tilde{Y}_{(q)})).
\]
We thus may apply a similar argument to Proposition~\ref{PropositionTau} to the pair $((Y,p);(Y,q))$. Because Proposition~\ref{PropositionTau} works for the pairs $(((X,p);(Y,p)))$ and $((Y,q);(X,q))$, we are done.

For general cases, apply the Gross trick.
\end{proof}

\begin{proof}[Proof of Corollary~$\ref{mcorc}$]

The first assertion holds true by Theorem~\ref{mthma}, Theorem~\ref{mthmb}, and the fact of that uniformly curved Banach spaces are isomorphic (and in particular sphere equivalent) to some uniformly convex Banach spaces, see Section~1. The second assertion holds true for the following reason: if $X\in [\ell_2]_S$, then by Theorem~\ref{mthma} and Lemma~\ref{LemmaSpGap}, the $(X,p)$-ander property is equivalent to the $(\mathbb{R},p)$-ander property. The original Matou\v{s}ek extrapolation enables us to extend our results even for $p=1$.
\end{proof}

\section{Applications}\label{SectionAppli}
\subsection{Distortions of expander graphs}
The following corollary is an immediate byproduct of Corollary~\ref{mcorc} and Corollary~\ref{CorollaryDist}. This result generalizes the result of Matou\v{s}ek \cite{Matousek} for $\ell_p$ spaces in qualitative sense.

\begin{corollary}\label{CorollaryExpDist}
Let $\{G_n\}_n$ be a family of expanders. Then for any infinite-dimensional Banach space $X$ which is sphere equivalent to a uniformly curved Banach space, we have that $c_X(G_n) \asymp_X \mathrm{diam}(G_n)$. 
\end{corollary}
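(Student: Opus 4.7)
The plan is to chain Corollary~\ref{mcorc} with Corollary~\ref{CorollaryDist}; no new ingredients are needed. Fix any exponent $p \in (1,\infty)$, say $p = 2$ for concreteness. Since $X$ is sphere equivalent to a uniformly curved Banach space, Corollary~\ref{mcorc} promotes the expander family $\{G_n\}_n$ to a family of $(X,p)$-anders, i.e.\ $\inf_n \lambda_1(G_n;X,p) > 0$.

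Feeding this into the first half of Corollary~\ref{CorollaryDist} already yields the lower bound $c_X(G_n) \succsim_X \mathrm{diam}(G_n)$: its proof invokes the generalized Grigorchuk--Nowak inequality (Theorem~\ref{TheoremGriNo}) together with the uniform lower bound $r_\epsilon(G_n) \succsim 1$, which expanders enjoy thanks to their exponential growth (bounded degree plus $|G_n|\to\infty$ combined with the spectral-gap bound on the Cheeger constant). For the matching upper bound we use the infinite-dimensionality of $X$: by Dvoretzky's theorem (see Remark~\ref{remark:referee}) the spaces $\{\ell_2^m\}_{m\geq 1}$ embed into $X$ with uniformly bounded distortions, so Bourgain's estimate $c_{\ell_2^{O((\log |G|)^2)}}(G) \precsim \log |G|$ passes to $X$, and the bounded-degree inequality $\log |G_n| \precsim_{\Delta} \mathrm{diam}(G_n)$ (from $|G_n| \leq (\Delta(G_n)-1)^{\mathrm{diam}(G_n)}$ and $\sup_n \Delta(G_n) < \infty$) converts this into $c_X(G_n) \precsim_X \mathrm{diam}(G_n)$.

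Combining the two sides gives $c_X(G_n) \asymp_X \mathrm{diam}(G_n)$, as claimed. No genuine obstacle arises here, since the substantive work has already been carried out in proving Theorems~\ref{mthma} and~\ref{mthmb} (repackaged into Corollary~\ref{mcorc}) and in assembling Theorem~\ref{TheoremGriNo} with Bourgain's and Dvoretzky's theorems (repackaged into Corollary~\ref{CorollaryDist}). The only care-point is that Corollary~\ref{mcorc}'s hypotheses cover $p \in (1,\infty)$ without requiring $X \in [\ell_2]_S$; any such interior $p$ suffices, so the boundary case $p=1$ need not be invoked.
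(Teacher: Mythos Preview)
Your proof is correct and follows exactly the route the paper takes: the corollary is stated there as an immediate byproduct of Corollary~\ref{mcorc} and Corollary~\ref{CorollaryDist}, and your write-up simply unpacks those two ingredients. The extra commentary on Grigorchuk--Nowak, Dvoretzky, and Bourgain just recapitulates the proof of Corollary~\ref{CorollaryDist}, so nothing new is introduced.
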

\subsection{Spectral gaps of $\ell_p$ spaces with different exponent}

By utilizing Theorem~\ref{TheoremSEgap}, we have certain estimation of the $\ell_p$ spectral gap with exponent $q$, where $p\ne q$. Below we only give a result for $p,q\geq 2$, but we have results for all pairs $(p,q)$.
\begin{corollary}\label{Corollarylplq}
Let $p,q\in [2,\infty)$. Then for any finite connected graph $G$, we have 
\[
\lambda_1(G;\ell_p,q) \asymp_{p,q,\Delta(G)} \lambda_1(G;\mathbb{R},2)^{q/2}.
\]
\end{corollary}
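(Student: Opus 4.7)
The plan is to sandwich $\lambda_1(G;\ell_p,q)$ between multiples of $\lambda_1(G;\mathbb{R},q)$ in both directions, and then invoke Matou\v{s}ek's extrapolation to rewrite $\lambda_1(G;\mathbb{R},q)$ in terms of $\lambda_1(G;\mathbb{R},2)^{q/2}$. All three tools needed are already in the paper: Lemma~\ref{LemmaSpGap}, Theorem~\ref{TheoremSEgap}(i), and the classical Matou\v{s}ek extrapolation.

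For the upper bound, I would use the fact that $\mathbb{R}$ sits isometrically inside $\ell_p$ as a one-dimensional subspace. By the subspace monotonicity of Lemma~\ref{LemmaSpGap}(1), this immediately gives $\lambda_1(G;\ell_p,q)\leq \lambda_1(G;\mathbb{R},q)$, which combined with Matou\v{s}ek's extrapolation (valid since $q\geq 2$) produces $\lambda_1(G;\ell_p,q)\precsim_{q,\Delta(G)} \lambda_1(G;\mathbb{R},2)^{q/2}$.

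For the lower bound, I would apply Theorem~\ref{TheoremSEgap}(i) to the pair $X=\ell_p$, $Y=\ell_2$ using the Mazur map $M_{p,2}\colon S(\ell_p)\to S(\ell_2)$ as the sphere uniform homeomorphism $\phi$. Since $p\geq 2$, the Mazur map is Lipschitz, i.e. $\delta_0(t)=(p/2)t\in\mathcal{M}_{M_{p,2}}$, so we may take the H\"older exponent $\alpha=1$ and $C=p/2$ in the refined conclusion of Theorem~\ref{TheoremSEgap}(i). This yields $\lambda_1(G;\ell_p,q)\succsim_{p,q,\Delta(G)} \lambda_1(G;\ell_2,q)$. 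Now Lemma~\ref{LemmaSpGap}(3) (applied with the roles of $p$ and $q$ swapped) gives $\lambda_1(G;\ell_2,q)=\lambda_1(G;\mathbb{R},q)$, and Matou\v{s}ek's extrapolation again converts this to $\lambda_1(G;\mathbb{R},2)^{q/2}$ up to constants depending only on $q$ and $\Delta(G)$.

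There is no real obstacle here: the statement is essentially a packaging of the machinery. The only point worth checking is that the multiplicative constants combine cleanly, and indeed each factor introduced depends only on $p$ (from the Mazur map Lipschitz constant $p/2$ and from the $\Phi_p$-modulus in Proposition~\ref{PropositionSt}), on $q$ (from Matou\v{s}ek extrapolation), and on $\Delta(G)$ (from both Matou\v{s}ek extrapolation and the Schreier-reduction/Gross trick embedded in Theorem~\ref{TheoremSEgap}(i)). The same scheme, with the Mazur-map modulus in the appropriate H\"older range and with Matou\v{s}ek's exponent-$p<2$ branch, handles the other pairs $(p,q)$ not treated in the statement.
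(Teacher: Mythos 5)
Your proposal is correct and follows essentially the same route as the paper: the lower bound via Theorem~\ref{TheoremSEgap} applied to the Lipschitz Mazur map $M_{p,2}$ (valid since $p\geq 2$) together with $\lambda_1(G;\ell_2,q)=\lambda_1(G;\mathbb{R},q)$ from Lemma~\ref{LemmaSpGap}, the upper bound via $\lambda_1(G;\ell_p,q)\leq\lambda_1(G;\mathbb{R},q)$, and Matou\v{s}ek's extrapolation to convert $\lambda_1(G;\mathbb{R},q)$ into $\lambda_1(G;\mathbb{R},2)^{q/2}$. No gaps; the constant bookkeeping matches the paper's $\asymp_{p,q,\Delta(G)}$ statement.
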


We note that the Mazur map $M_{p,q}\colon S(\ell_p) \to S(\ell_q)$ is Lipschitz if and only if $p\geq q$. The assertion above nevertheless states that even for $q\geq p\geq 2$, the $q$-th root of the spectral gap has the same order as the square root of the classical one.

\begin{proof}
Because $p\geq 2$, the Mazur map $M_{p,2}$ is Lipschitz. Theorem~\ref{TheoremSEgap} implies that
\[
\lambda_1(G;\ell_p,q)^{1/q} \succsim_{p,q,\Delta(G)} \lambda_1(G;\ell_2,q)^{1/q}=\lambda_1(G;\mathbb{R},q)^{1/q}.
\]
Then use Matou\v{s}ek's extrapolation for $q$ to obtain that $\lambda_1(G;\mathbb{R},q)^{1/q} \asymp_{q,\Delta(G)} \lambda_1(G;\mathbb{R},2)^{1/2}$. To have the converse-order inequality, note that $\lambda_1(G;\ell_p,q)\leq \lambda_1(G;\mathbb{R},q)$.
\end{proof}

\subsection{The order of distortions of Hamming cubes into $\ell_p$ space}
As a corollary to Theorem~\ref{TheoremSEgap}, here we give a different proof of the following assertion, which is well-known for $p\in [1,2]$ and has been first proved by Naor--Schechtman \cite{NS} (see the introduction of their paper) for $p>2$.
\begin{corollary}$($Compare with \cite{NS}$)$\label{CorollaryHam}
Let $H_n$ be the $n$-dimensional Hamming cube. 
\begin{enumerate}[$(1)$]
  \item For $p\in [1,2)$, then we have that $\lambda_1(H_n;\ell_p,p)\asymp_p 1$, $c_{\ell_p}(H_n) \asymp_p n^{1-\frac{1}{p}}$ and that $\sup\{ \alpha \in [0,1]: t^{\alpha} \in \mathcal{R}_{\ell_p}(\coprod_n H_n)\}=\frac{1}{p}$.
  \item For $p\in [2,\infty)$, then we have that $\lambda_1(H_n;\ell_p,p)\asymp_p n^{1-\frac{p}{2}}$, $c_{\ell_p}(H_n) \asymp_p n^{\frac{1}{2}}$ and that $\sup\{ \alpha \in [0,1]: t^{\alpha} \in \mathcal{R}_{\ell_p}(\coprod_n H_n)\}=\frac{1}{2}$.
\end{enumerate}
\end{corollary}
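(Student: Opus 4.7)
The plan is to reduce the $\ell_p$-spectral gap to the scalar case via Lemma~\ref{LemmaSpGap}$(3)$, giving $\lambda_1(H_n;\ell_p,p)=\lambda_1(H_n;\mathbb{R},p)$, and then rely on the classical $L_p$-Poincar\'e behaviour of $H_n$; alternatively one can route the reduction through Theorem~\ref{TheoremSEgap}$(ii)$ applied to the Mazur map $M_{p,2}$ together with the natural Schreier presentation of $H_n$ over $(\mathbb{Z}/2\mathbb{Z})^n$ (for which $\nu=1$ and $|S|=n$). The Walsh--Hadamard diagonalisation provides the baseline $\lambda_1(H_n;\mathbb{R},2)=2$. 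All three claims in each part then cascade: Jolissaint--Valette (Theorem~\ref{TheoremJoVa}) turns $\lambda_1$ into a distortion lower bound by vertex-transitivity of $H_n$, and Lemma~\ref{LemmaAustin} converts the distortion order into the compression exponent.

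For the spectral-gap computation I would test the Rayleigh quotient with two explicit Walsh characters. The single-coordinate function $f(v):=(-1)^{v_1}$ gives $\sum_v|f(v)-m(f)|^p=2^n$ and $\sum_{(v,w)\in E}|f(w)-f(v)|^p=2^{p+n}$, whence $\lambda_1(H_n;\mathbb{R},p)\le 2^{p-1}$ for all $p\in[1,\infty)$, the desired upper bound in part~$(1)$. The linear function $f(v):=\sum_{i=1}^n(-1)^{v_i}$ gives, via a standard binomial moment estimate, $\sum_v|f(v)-m(f)|^p\asymp 2^n n^{p/2}$ against $\sum_{(v,w)\in E}|f(w)-f(v)|^p=2^p n\cdot 2^n$, so $\lambda_1(H_n;\mathbb{R},p)\precsim_p n^{1-p/2}$, the upper bound in part~$(2)$. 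The matching lower bounds come from Matou\v{s}ek's extrapolation in its explicit quantitative form extracted from \cite{Matousek} and \cite{BLMN}, namely $\lambda_1(G;\mathbb{R},p)\succsim_p \Delta(G)^{1-p/2}\lambda_1(G;\mathbb{R},2)^{p/2}$ for $p\ge 2$, which on $H_n$ returns exactly $n^{1-p/2}$; and from the tensorisation of the one-dimensional $L_p$-Poincar\'e inequality on $K_2$ across the $n$ factors of $H_n=K_2^n$, which returns $\asymp_p 1$ for $p\in[1,2)$.

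The distortion and compression bounds then follow. For distortion lower bounds, Theorem~\ref{TheoremJoVa} combined with $\mathrm{diam}(H_n)=\Delta(H_n)=n$ yields $c_{\ell_p}(H_n)\succsim n\bigl(\lambda_1(H_n;\ell_p,p)/n\bigr)^{1/p}$, evaluating to $n^{1-1/p}$ in part~$(1)$ and $n^{1/2}$ in part~$(2)$; for the matching upper bounds the coordinatewise identity $H_n\hookrightarrow\ell_p^n$ has Lipschitz constant $1$ and inverse Lipschitz constant $n^{1-1/p}$ (optimal in part~$(1)$), while in part~$(2)$ I would compose the Hamming inclusion $H_n\hookrightarrow\ell_2^n$ (of distortion $\sqrt n$) with a Dvoretzky-type isomorphic embedding $\ell_2^n\hookrightarrow\ell_p^N$ of distortion $O_p(1)$. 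Finally, Lemma~\ref{LemmaAustin} applied with $\rho(t):=t^\alpha$ forces $n^{1-\alpha}\succsim c_{\ell_p}(H_n)$ whenever $\rho\in\mathcal{R}_{\ell_p}(\coprod_n H_n)$, giving $\alpha\le 1/p$ in part~$(1)$ and $\alpha\le 1/2$ in part~$(2)$; the converse inequalities are witnessed by assembling the above embeddings, translated far apart inside $\ell_p$ so as to respect the definition of the coarse disjoint union, into genuine coarse embeddings of $\coprod_n H_n$. The main technical point along the way will be tracking the precise $\Delta$-dependence in Matou\v{s}ek's extrapolation, since the version recorded in the excerpt only states the loose form ``$\asymp_{\Delta(G),p}$'' and the explicit factor $\Delta^{1-p/2}$ for $p\ge 2$ must be pulled out of the original proofs in \cite{Matousek} and \cite{BLMN}.
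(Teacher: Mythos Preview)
Your argument is correct, but it takes a different route from the paper's and leans on inputs external to the paper.  The paper does \emph{not} first reduce to $\lambda_1(H_n;\mathbb{R},p)$ and then invoke a quantitative Matou\v{s}ek extrapolation with explicit $\Delta$-dependence.  Instead it works entirely at the level of the displacement constants: the Mazur map $M_{p,2}$ is coordinatewise and hence $\mathrm{Sym}(\Gamma_n)$-equivariant as a map $S(\ell_p(\Gamma_n,\ell_p))\to S(\ell_2(\Gamma_n,\ell_2))$, so the argument of Proposition~\ref{PropositionTau} applies to compare $\kappa_{\ell_p,p}(\Gamma_n,\{e\},S_n)$ directly with $\kappa_{\ell_2,2}(\Gamma_n,\{e\},S_n)\asymp n^{-1/2}$, changing both the Banach space \emph{and} the exponent in one step.  (Your ``alternative'' via Theorem~\ref{TheoremSEgap}$(ii)$ does not reproduce this: that theorem keeps the exponent $p$ fixed, so it only compares $\lambda_1(H_n;\ell_p,p)$ with $\lambda_1(H_n;\ell_2,p)=\lambda_1(H_n;\mathbb{R},p)$, which is circular.)  The paper then converts $\kappa$ to $\lambda_1$ via the exact Pak--\.{Z}uk identity of Example~\ref{ExampleHamming} ($\nu=1$), which absorbs the growing degree without any appeal to the $\Delta^{1-p/2}$ refinement of Matou\v{s}ek.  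What your approach buys is conceptual simplicity once one accepts the scalar hypercube Poincar\'e as known; what the paper's approach buys is self-containment and a demonstration that its $(\tau)$-constant machinery handles unbounded degree cleanly.  One caution on your side: ``tensorisation of the one-dimensional $L_p$-Poincar\'e on $K_2$'' is underspecified for $p\in[1,2)$ --- the naive telescoping/triangle-inequality tensorisation only yields $\lambda_1(H_n;\mathbb{R},p)\succsim_p n^{1-p}$, not $\asymp_p 1$; you need either a genuine sub-additivity argument for $p\le 2$, hypercontractivity, or the explicit Matou\v{s}ek bound for $p<2$ with the correct $\Delta$-dependence.
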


\begin{proof}
Let $S_n$ be the standard set of generators of $\Gamma_n:=(\mathbb{Z}/2\mathbb{Z})^n$. Then by Proposition~\ref{PropositionTau} (with the usual Mazur map), we have the following order inequalities:
\begin{itemize}
 \item For $p\in [1,2)$, $\kappa_{\ell_p,p}(\Gamma_n,\{e\},S_n) \succsim_{p} \kappa_{\ell_2,2}(\Gamma_n,\{e\},S_n)^{2/p} \asymp n^{-1/p}$.
 \item For $p\in [2,\infty)$, $\kappa_{\ell_p,p}(\Gamma_n,\{e\},S_n) \succsim_{p} \kappa_{\ell_2,2}(\Gamma_n,\{e\},S_n) \asymp n^{-1/2}$.
\end{itemize}
Here we can obtain that $\kappa_{\ell_2,2}(\Gamma_n,\{e\},S_n) \asymp n^{-1/2}$ by spectral theory. First we consider the case where $p\in [1,2)$. Then for each $n$, we can find a vector $\xi_n \in S(\ell_{p,0}(\Gamma_n ,\mathbb{R})) $ which realizes the order $n^{-1/p}$ above, namely, which satisfies that
\[
\sup_{s\in S_n} \| \pi_{\ell_p,p}(s) \xi_n -\xi_n\| \asymp_p  n^{-1/p}.
\]
Therefore $\kappa_{\ell_p,p}(\Gamma_n,\{e\},S_n)  \asymp_p n^{-1/p}$. This together with the argument in Example~\ref{ExampleHamming} and Theorem~\ref{TheoremJoVa} implies the order equality on $\lambda_1$ and one side ($\succsim $) of the order inequality in the distortion estimation in the corollary. Because the standard embedding of $H_n$ into the $n$-dimensional real $\ell_p$ space achieves the order, this distortion order is optimal. For the compression exponent, the same embedding gives the exponent by Lemma~\ref{LemmaAustin}.

Finally we deal with the case where $p\in [2,\infty)$. Then we observe  that $H_n \hookrightarrow \ell_2 \hookrightarrow L_p$ (for the second one we take an isometric embedding) and argue in a similar way to the one above.
\end{proof}

Similarly, we have the following estimations for the $p$-$(\tau)$-type constant on $\ell_p$  for $\Gamma_n=\mathrm{SL}_n(\mathbb{Z})$ with a standard set of generators $T_n$ for $n\geq 3$, with the aid of a theorem of M. Kassabov \cite{Kassabov} on the Kazhdan constant of $(\Gamma_n,T_n)$. 

\begin{proposition}[Compare with \cite{Kassabov}]\label{PropositionKassabov}
For $n\geq 3$, let $T_n$ be the set of all unit elementary matrices in $\mathrm{SL}_n(\mathbb{Z})$, namely, the set of all matrices in $\mathrm{SL}_n(\mathbb{Z})$ whose on-diagonal entries are $1$, all but one off-diagonal entries are $0$, and the remaining entry has a value in $\{\pm 1\}$. Then we have the following:
\begin{enumerate}[$(1)$]
   \item For $p\in [1,2)$, $\kappa^{(\tau)}_{\ell_p,p} (\mathrm{SL}_n(\mathbb{Z}),T_n)\asymp_{p} n^{-1/p}$.
     \item For $p\in [2,\infty)$, $\kappa^{(\tau)}_{\ell_p,p} (\mathrm{SL}_n(\mathbb{Z}),T_n)\asymp_{p} n^{-1/2}$.
\end{enumerate}
\end{proposition}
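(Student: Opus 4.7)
The plan is to mirror the proof of Corollary~\ref{CorollaryHam}, with Kassabov's theorem on the Kazhdan constant of $(\mathrm{SL}_n(\mathbb{Z}),T_n)$ playing the r\^ole of the Laplacian spectral computation of the Hamming cube. Kassabov's theorem, passed to quasi-regular representations on congruence quotients, yields $\kappa^{(\tau)}_{\ell_2,2}(\mathrm{SL}_n(\mathbb{Z}),T_n) \asymp n^{-1/2}$, and from this input I will deduce both the upper and lower bounds on $\kappa^{(\tau)}_{\ell_p,p}$.

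For the lower bound I will apply Proposition~\ref{PropositionTau} with the usual Mazur map $M_{p,2}$ in the ``full-level'' form used in the proof of Corollary~\ref{CorollaryHam}: instead of passing through the coordinatewise $\Phi_p$ of Proposition~\ref{PropositionSt} at exponent $p$ (which would produce only the less useful $\kappa_{\ell_2,p}$ on the right-hand side), I view $M_{p,2}$ directly as a $\mathrm{Sym}(\mathrm{SL}_n(\mathbb{Z})/H)$-equivariant uniform homeomorphism between the unit spheres of $\ell_p(\mathrm{SL}_n(\mathbb{Z})/H\times\mathbb{N})$ and $\ell_2(\mathrm{SL}_n(\mathbb{Z})/H\times\mathbb{N})$. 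Executing the Bader--Furman--Gelander--Monod argument from the proof of Proposition~\ref{PropositionTau} with this identification, and using that the Mazur map is $\min(p/2,1)$-H\"older in one direction and Lipschitz in the other, yields for every finite-index subgroup $H\leq\mathrm{SL}_n(\mathbb{Z})$
\[
\kappa_{\ell_p,p}(\mathrm{SL}_n(\mathbb{Z}),H,T_n) \succsim_p \kappa_{\ell_2,2}(\mathrm{SL}_n(\mathbb{Z}),H,T_n)^{\max(2/p,1)}.
\]
Taking the infimum over $H$ and plugging in Kassabov's theorem delivers the asserted lower bound $\kappa^{(\tau)}_{\ell_p,p}(\mathrm{SL}_n(\mathbb{Z}),T_n) \succsim_p n^{-\max(1/p,1/2)}$.

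For the upper bound, Lemma~\ref{LemmaTauConst}(1) gives $\kappa_{\ell_p,p}\leq \kappa_{\mathbb{R},p}$, so it suffices to estimate $\kappa^{(\tau)}_{\mathbb{R},p}(\mathrm{SL}_n(\mathbb{Z}),T_n)$. By Example~\ref{ExampleHamming}, Theorem~\ref{TheoremPZ} applies to $(\mathrm{SL}_n(\mathbb{Z}),T_n)$ with $\nu=2$, giving $\lambda_1(\mathrm{Sch}(\mathrm{SL}_n(\mathbb{Z}),H,T_n);X,p)\asymp |T_n|\,\kappa_{X,p}^p$ on every Schreier coset graph. On Kassabov's congruence quotients $G_n:=\mathrm{Sch}(\mathrm{SL}_n(\mathbb{Z}),H_n,T_n)$ this yields $\lambda_1(G_n;\mathbb{R},2)\asymp n$. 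For $p\in[2,\infty)$, Matou\v{s}ek's extrapolation with the explicit $\Delta$-dependence (exactly as read off from the Hamming cube computation in Corollary~\ref{CorollaryHam}) gives $\lambda_1(G_n;\mathbb{R},p)\asymp_p \Delta(G_n)^{1-p/2}\lambda_1(G_n;\mathbb{R},2)^{p/2}\asymp_p n^{2-p/2}$, whence $\kappa^{(\tau)}_{\mathbb{R},p}\asymp_p n^{-1/2}$. For $p\in[1,2)$, I substitute Kassabov's $\{\pm1\}$-valued scalar test function $\xi$ directly into the $\ell_p$-Rayleigh quotient: since $|\pi(s)\xi(v)-\xi(v)|\in\{0,2\}$, the ratio $\|\pi(s)\xi-\xi\|_p/\|\xi\|_p$ reduces to $2(|D_s|/|V|)^{1/p}$, and the Hilbert-norm estimate forces $|D_s|/|V|\asymp n^{-1}$, producing $\kappa^{(\tau)}_{\mathbb{R},p}\precsim_p n^{-1/p}$.

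The main obstacle is the level-setting in the Mazur-map argument: a literal application of Proposition~\ref{PropositionTau} at exponent $p$ only produces $\kappa_{\ell_2,p}$ on the right-hand side, and $\Delta$-sensitive Matou\v{s}ek extrapolation does not let one recover the sharp scaling $\kappa_{\ell_2,2}^{\max(2/p,1)}$ from this. The fix is precisely the trick from Corollary~\ref{CorollaryHam}: apply the Mazur map once, on the collapsed full function space, so that $\mathrm{Sym}(\mathrm{SL}_n(\mathbb{Z})/H)$-equivariance reduces the right-hand side directly to the classical $\ell_2$-Kazhdan constant supplied by Kassabov.
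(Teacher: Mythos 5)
Your lower bound is exactly the route the paper intends (the paper proves Proposition~\ref{PropositionKassabov} only by the phrase ``similarly'' referring to Corollary~\ref{CorollaryHam}): the ``full-level'' Mazur map, viewed as a $\mathrm{Sym}(\Gamma/H)$-equivariant uniform homeomorphism between $S(\ell_p(\Gamma/H\times\mathbb{N}))$ and $S(\ell_2(\Gamma/H\times\mathbb{N}))$ intertwining $\pi_{\ell_p,p}$ and $\pi_{\ell_2,2}$, fed into the argument of Proposition~\ref{PropositionTau}, is precisely how the two displayed bullets in the proof of Corollary~\ref{CorollaryHam} arise, and Kassabov's lower bound applies to every $\ell_{2,0}(\Gamma/N,\widetilde{(\ell_2)}_{(2)})$, hence survives the infimum over $N$. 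Your $p\in[1,2)$ upper bound is also correct in substance, though your description of the test vector is off: the vector certifying Kassabov's upper bound on congruence quotients is (up to the harmless mean subtraction) the indicator of the $n$ standard basis vectors inside the orbit of $e_1$ in $(\mathbb{Z}/k\mathbb{Z})^n$, not a $\pm1$-valued function of full support; what makes the transfer to $\ell_p$ work is that it is two-valued with $\|\xi\|_p^p\asymp\|\xi\|_2^2\asymp n$ while each generator displaces only $O(1)$ mass, giving displacement $\asymp(1/n)^{1/p}$.

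The genuine gap is the upper bound for $p\in[2,\infty)$. You invoke ``Matou\v{s}ek's extrapolation with the explicit $\Delta$-dependence'' $\lambda_1(G;\mathbb{R},p)\asymp_p\Delta(G)^{1-p/2}\lambda_1(G;\mathbb{R},2)^{p/2}$, but the theorem as stated (and as used in this paper) has unspecified $\Delta(G)$-dependent constants, and here $\Delta(G_n)=|T_n|\asymp n^2$ is unbounded, so it gives nothing; worse, the direction you actually need, $\lambda_1(G;\mathbb{R},p)\precsim_p\Delta(G)^{1-p/2}\lambda_1(G;\mathbb{R},2)^{p/2}$, is false for general graphs (for two copies of $K_m$ joined by a single edge one has $\lambda_1(G;\mathbb{R},p)\asymp_p 1/m$ for every $p$, while $\Delta^{1-p/2}\lambda_1(G;\mathbb{R},2)^{p/2}\asymp m^{1-p}$), and it cannot be ``read off'' from Corollary~\ref{CorollaryHam}: there the $p\geq 2$ upper bound is not obtained by extrapolation at all, but from the embedding $H_n\hookrightarrow\ell_2\hookrightarrow L_p$ together with Theorem~\ref{TheoremJoVa}/Theorem~\ref{TheoremPZ}. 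The repair is to mimic that step rather than extrapolate: since $\ell_2$ embeds isometrically into $L_p$, Lemma~\ref{LemmaTauConst} gives $\kappa_{\ell_p,p}=\kappa_{L_p,p}\leq\kappa_{\ell_2,p}$, so it suffices to exhibit an $\ell_2$-valued zero-sum test vector with $p$-displacement $\precsim n^{-1/2}$. For instance, on the orbit $O$ of $e_1$ in $(\mathbb{Z}/k\mathbb{Z})^n$ take $\xi(v):=(\delta_{v_1},\dots,\delta_{v_n})\in\ell_2^{nk}$ minus its mean: each elementary generator changes only one block, so $\|\xi(s^{-1}v)-\xi(v)\|_2\leq\sqrt2$ at every vertex, while $\|\xi(v)-m(\xi)\|_2\asymp\sqrt n$ uniformly in $v$; hence the $p$-displacement is $\precsim n^{-1/2}$ for every $p$, which is exactly the bound your argument is missing for $p\geq 2$.
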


Note that Theorem~\ref{TheoremPZ} applies to the pair $(\mathrm{SL}_n(\mathbb{Z}),T_n)$ above with $\nu=2$. Hence we have a corresponding estimation of Banach spectral gaps for some Schreier graphs coming from $(\mathrm{SL}_n(\mathbb{Z}),T_n)$. (For instance, for $\mathrm{Cay}(\mathrm{SL}_n(\mathbb{Z}/k \mathbb{Z}),T_n\ \mathrm{mod}\ k)$ for $k\geq 2$. The estimates above does not depend on $k$.) We also mention that the same-order estimates as in Proposition~\ref{PropositionKassabov} hold for $L_p$-Kazhdan constants for $p\in (1,\infty)$. Here for a fixed $p\in (1,\infty)$,   the $L_p$-Kazhdan constant for $(\Gamma,S)$, $\Gamma=\langle S\rangle$, is defined by
\[
\inf_{(\rho,B)}\inf_{\xi \in S(B_{\rho(\Gamma)}')} \sup_{s\in S} \|\rho(s)\xi -\xi\|,
\]
where $(\rho,B)$ runs over all isometric linear representations of $\Gamma$ on $B$, where  $B$ is of the form $L_p(\Omega,\mu)$ for an arbitrary measure space $(\Omega,\mu)$ (recall that $p$ is fixed). Here $B_{\rho(\Gamma)}'$, defined in \cite[Proposition~2.6]{BFGM},  denotes the natural complement of the space $B^{\rho(\Gamma)}$ of all $\rho(\Gamma)$-invariant vectors in $B$ (note that $B$ is uniformly convex and uniformly smooth because $p\in (1,\infty)$).

\section*{acknowledgments}
The author wishes to express his gratitude to Professor Alain Valette for hospitality for his stay at University of Neuch\^{a}tel from April, 2012 to September, 2012, and for encouragement for this study. He also thanks Professor Guoliang Yu and Professor Qin Wang for their kindness invitation to East Normal China University and Fudan University in Shanghai in July, 2013. Part of this work was done during these stays. He acknowledges Assaf Naor for comments and references. He is also grateful to Kei Funano, Takefumi Kondo, and Narutaka Ozawa for discussions, to Masahiko Kanai and Akihiro Munemasa for comments, and to the anonymous referee, whose comments considerably improve this paper. 

\bibliographystyle{amsalpha}
\bibliography{mimura_IMRN_2014_137.bib}

\end{document}